\setlist[enumerate]{itemsep=0mm}
    \titleformat{\section}{\large\bfseries}{\thesection}{1em}{}
\def\Z{Z_{\max}}
\def\y{Z_{\min}}
\newcommand{\ff}{\mathcal{F}}
\newtheorem{theorem}{Theorem}
\title{\sf The quadratic balanced optimization problem\footnote{This research work was supported by an NSERC  Discovery grant and an NSERC discovery accelerator supplement awarded to Abraham P Punnen. }}
\author[1]{\normalsize \sf Abraham P. Punnen\thanks{apunnen@sfu.ca}}
\author[1]{\sf Sara Taghipour\thanks{ sara\_taghipour@sfu.ca}}
\author[1,2]{\sf Daniel Karapetyan\thanks{daniel.karapetyan@gmail.com}}
\author[3]{\sf Bishnu Bhattacharyya\thanks{bishnu@gmail.com}}
\affil[1]{\normalsize Department of Mathematics, Simon Fraser University Surrey, Central
City, 250-13450 102nd AV, Surrey, British Columbia, V3T 0A3, Canada}
\affil[2]{\normalsize ASAP Research Group, School of Computer Science, University of Nottingham, UK}
\affil[3]{Google, Mountain View, CA, USA}
\newenvironment{mathprog}[1][]%
{\par\bigskip\noindent\ignorespaces\begin{tabular*}{\textwidth}{@{} p{0.35\textwidth} @{\hspace{0.5em}} l} #1}%
{\end{tabular*}\\\smallskip\par\noindent\ignorespacesafterend}
\newcommand{\progline}[2][]{\hfill #1 & \begin{math}\displaystyle #2 \end{math}\\}
\date{}
\begin{document}

\maketitle

\begin{abstract}
\noindent We introduce the quadratic balanced optimization problem (QBOP) which can be used to model equitable distribution of resources with pairwise interaction. QBOP is strongly NP-hard even if the family of feasible solutions has a very simple structure.
Several general purpose exact and heuristic algorithms are presented.  Results of extensive  computational experiments are reported using randomly generated quadratic knapsack problems as the test bed. These results illustrate the efficacy of our exact and heuristic algorithms. We also show that when the cost matrix is specially structured, QBOP can be solved as a sequence of linear balanced optimization problems. As a consequence, we have several polynomially solvable cases of QBOP. \\

\noindent
\textbf{Keywords:} combinatorial optimization, balanced optimization, knapsack problem, bottleneck problems, heuristics.
\end{abstract}

\section{Introduction}

Let $E=\{1,2,\ldots, m\}$ be a finite set and $\ff$ be a
family of non-empty subsets of $E$. It is assumed that $\ff$ is represented in a compact form of size polynomial in $m$ without explicitly listing its elements.  For each $(i,j)\in E\times E$, a cost $c_{ij}$ is prescribed.
 The elements of $\ff$ are called
feasible solutions and the $m\times m$ matrix $C=(c_{ij})$ is called the {\it cost matrix}. Then the {\it quadratic balanced optimization problem} (QBOP) is to find  $S\in \ff$ such that
  $$\max\{c_{ij}~:~ (i,j)\in S\times S\}-\min\{c_{ij}~:~ (i,j)\in S\times S\}$$
is as small as possible.

 QBOP is closely related to the {\it balanced optimization problem} introduced  by Martello et al~\cite{Martello1984}. To emphasize the difference between the balanced optimization problem of~\cite{Martello1984} and QBOP, we call the former a {\it linear balanced optimization problem} (LBOP).  Special cases of LBOP were studied by many authors~\cite{ b3, b0, Galil1988, h1, b4, Martello1984, b6}. Optimization problems with objective functions similar to that of LBOP have been studied by Zeitln~\cite{b15} for resource allocation, by Gupta and Sen~\cite{b23}, Liao and Huang~\cite{b24}, and Tegez and Vlach~\cite{y1,y2,y3} for machine scheduling,  by Ahuja~\cite{ah} for linear programming, by Scutell\`{a}~\cite{b20} for network flows, and by Liang et al.~\cite{liang} for workload balancing.  A generalization of LBOP where elements of $E$ are categorized has been studied by Bere\v{z}n\'{y}  and Lacko~\cite{b1,b2} and Grin\`{e}ov\'{a}, Kravecov\'{a}, and  Kul\'{a}\`{e}~\cite{g1}. Punnen and Nair~\cite{b9} studied LBOP with an additional linear constraint. Punnen and Aneja~\cite{b8} and Turner et al.~\cite{tur} studied the lexicographic version of LBOP\@. To the best of our knowledge, QBOP has not been studied in literature so far.

Most of the applications of LBOP discussed in literature translate into applications of QBOP by interpreting $c_{ij}$ as the pairwise interaction weight of elements $i$ and $j$ in $E$. To illustrate this, let us consider the following variation of the travel agency example of Martello et al.~\cite{Martello1984}. A North American  travel agency is planning to prepare a European tour package. Its clients travel from New York to London by a chartered flight. The clients have the option to choose a maximum of two tourist locations from an available set $S$ of locations. If a client chooses locations $i$ and $j$, then $c_{ij}$ is the total tour time. There are $n$ potential locations and the company wishes to choose $k=|S|$ locations to be included in the package so that the duration of tours for any pair of locations is approximately the same. This way, one can avoid waiting time of clients in London, after their tour and the whole group can return by the same chartered flight. The objective of the tour company can be represented as Minimizing $\max\{c_{ij} \, : \; (i,j)\in S\times S\}-\min\{c_{ij} \, : \; (i,j)\in S\times S\}$ while satisfying appropriate constraints.

Other applications of the model include balanced portfolio selection for managing investment accounts where risk estimates on pairs of investment opportunities are to be considered because of hedging positions and participant selection for psychological experiments where it is important that all the people in the group equally know each other.

The objective function of QBOP can be viewed as range of a covariance matrix $C$ associated with a combinatorial optimization problem. In this case, BQOP attempts to minimize a dispersion measure. Minimization of various measures of dispersion such as variance, absolute deviation from the mean etc.\ has been studied in the context of combinatorial optimization problems~\cite{kat,pa}. However, none of these studies take into consideration information from the covariance matrix which measures impact of pairwise interaction. This interpretation leads to other potential applications of our model.

In this paper we study QBOP and propose several general purpose algorithms. The polynomial solvability of these algorithms are closely related to that of an associated feasibility problem. QBOP is observed to be NP-hard even if the family $\ff$ of feasible solutions has very simple structure. We also investigate QBOP when the cost matrix $C$ has a decomposable structure, i.e., $c_{ij}=a_i+b_j$ or $c_{ij}=a_ib_j$. In each of these special cases, we show that QBOP can be solved in polynomial time whenever the corresponding LBOP can be solved in polynomial time. As a consequence, we have $O(m^2\log n)$ and $O(n^{6})$ algorithms for QBOP when $\ff$ is chosen as spanning trees of a graph on $n$ nodes and $m$ edges or perfect matchings on a $K_{n,n}$, respectively. Our general purpose exact algorithms can be modified into heuristic algorithms. Some sufficient conditions are derived to speed up these algorithms and their effect is analyzed using extensive experimentation in the context of quadratic balanced knapsack problems. We also compared the heuristic solutions with  exact solutions  and the results establish the efficiency of our heuristic algorithms.

The paper is organized as follows. In Section 2 we discuss the complexity of the problem and introduce notations and
definitions. Section 3 deals with exact and heuristic algorithms. In Section 4 we present our polynomially solvable special cases. In Section 5 we discuss the special case of the quadratic balanced knapsack problem. Experimental results are presented in Section 6. In Section 7, a generalization QBOP where interaction between $k$-elements are considered instead of two elements as in the case of QBOP.  Concluding remarks are presented in Section 8.

\section{Complexity and notations}

Without loss of generality, we assume that $c_{ij} \geq 0$ for otherwise we can add a large constant to all $c_{ij}$ values to get an equivalent problem with non-negative cost values. It may be noted that when $c_{ij} \geq 0$ for all $(i,j)\in E\times E$ and $c_{ii}=0$ for $i\in E$, QBOP reduces to the {\it quadratic bottleneck problem} (QBP)~\cite{bur,pz}. QBP is NP-hard even if $\ff$ is the collection of all subsets of $E$ with cardinality no more than $k$ for a given $k$, which depends on $m$~\cite{pz}. In fact, for such a problem, computing an $\epsilon$-optimal solution is also NP-hard for any $\epsilon > 0$ even if $c_{ij}\in \{0,1\}$ and $c_{ii}=0$~\cite{pz}. As an immediate consequence, it can be verified that for the corresponding instance of QBOP, computing an $\epsilon$-optimal solution is NP-hard for any $\epsilon > 0$. In contrast, the corresponding LBOP is polynomially solvable. Thus, the complexity of QBOP and LBOP are very different and QBOP apparently is a more difficult problem.\\

For a given cost matrix $C$ and $S\in \ff ,$ we denote
\begin{align*}
 &\Z(C,S) = \max\{c_{ij} : (i,j)\in S\times S\},\\   &\y(C,S)=\min\{c_{ij} : (i,j)\in S\times S\}\; \mbox{ and }\\
 &Z(C,S)=\Z(C,S)-\y(C,S).
 \end{align*}
For a given family of feasible solutions, we use the notation QBOP($C$) to indicate that the cost matrix under consideration for QBOP is $C$. Thus, QBOP($C$) and QBOP($C^*$), where $C\neq C^*$,  are two instances of QBOP with the same family of feasible solutions but different cost matrices $C$ and $C^*$ respectively.

For any two real numbers $\alpha$ and $\beta$ such that  $\alpha\leq \beta$ and cost matrix $C$, let $F(C,\alpha,\beta)=\{S \in \ff : \y(C,S) \geq \alpha \mbox{ and } \Z(C,S)\leq \beta  \}$ and $E(C,\alpha, \beta)=\{(i,j) : c_{ij} < \alpha \mbox{ or } c_{ij} > \beta\}$. Then the {\it quadratic feasibility problem} can be stated as follows: ``Given two real numbers $\alpha$ and $\beta$, where $\alpha\leq \beta$, test if $F(C,\alpha,\beta) \neq \emptyset$ and produce an $S\in F(C,\alpha,\beta)$ whenever $F(C,\alpha,\beta) \neq \emptyset$.''

Any solution $S\in \ff$ can be represented by its incidence vector $x=(x_1,x_2,\ldots ,x_m)$,  where

\begin{align*}
x_i = \begin{cases}
1 &\mbox{ if } i\in S,\\
0 & \mbox{ otherwise.}
\end{cases}
\end{align*}

The solution represented by an incidence vector $x$ is denoted by $S(x)$. Let $\ff_\text{I}$ be the the set of incidence vectors of elements of $\ff$. Consider the cost matrix $C^{\prime}$ given by
\begin{equation*}
c^{\prime}_{ij} = \begin{cases}
1 &\mbox{ if $(i,j)\in E(C,\alpha,\beta)$}, \\
0 & \mbox{ otherwise.}
\end{cases}
\end{equation*}

\noindent Then the quadratic feasibility problem has a `yes' answer if and only if the optimal objective function value of the {\it quadratic combinatorial optimization problem} (QCOP)

\begin{mathprog}
\progline[Minimize]{\sum_{i=1}^m\sum_{j=1}^m c^{\prime}_{ij}x_ix_j}
\progline[subject to]{x \in \ff_\text{I}, x_{j}=0 \mbox{ or } 1, j=1,2,\ldots, n.}
\end{mathprog}
is zero and if $x^0$ is the corresponding optimal  solution then  $S(x^0)\in F(C,\alpha,\beta)$. Thus, the quadratic feasibility problem can be solved by solving the QCOP.

The quadratic feasibility problem can also be viewed as the feasibility version of the {\it linear combinatorial optimization problem with conflict pairs} (LCOP), where the associated set of conflict pairs is precisely $E(C,\alpha,\beta)$; i.e, the quadratic feasibility problem has a ``yes'' answer if and only if the set
\begin{equation}\{ x : x\in \ff_\text{I}, x_i+x_j \leq 1 \mbox{ for } (i,j)\in E(C,\alpha,\beta)\}\end{equation}
is non-empty. For details on the LCOP we refer to~\cite{dar, Zhang2009}. The quadratic feasibility problem discussed above is closely related to the quadratic feasibility problem studied by Punnen and Zhang~\cite{pz} in the context of quadratic bottleneck problems.

\section{Exact and heuristic algorithms for QBOP}

Let us now consider some general results which are used in the subsequent sections to design algorithms for QBOP\@. Let $\ff^*=\{S_1,S_2,\ldots ,S_r\}$ be a subset of $\ff$ satisfying the following properties:
\begin{itemize}[leftmargin=3em,labelindent=3em]
\item[(P1)] There exists an $S_i\in \ff^*$ which is an optimal solution to QBOP,
\item[(P2)] $\Z(C,S_1) < \Z(C,S_2) < \cdots < \Z(C,S_r)$.

\end{itemize}
For any index $k$, $1\leq k\leq r$, let $\pi(k)$ be the index  such that $\Z(C,S_{\pi(k)})-\y(C,S_{\pi(k)})=\min\{\Z(C,S_{i})-\y(C,S_{i}): 1\leq i \leq k\}$. Then, clearly, $S_{\pi(r)}$ is an optimal solution to QBOP\@. Let $\Omega$ be a real number such that $\Omega \geq \y(C,S_i)$ for any optimal solution $S_i$ for QBOP in $\ff^*$.

\begin{theorem}\label{tth1}For any $1\leq k \leq r$, if $\Z(C,S_{\pi(k)})-\y(C,S_{\pi(k)})+\Omega \leq \Z(C,S_k)$ then $S_{\pi(k)}$ is an optimal solution to QBOP.
\end{theorem}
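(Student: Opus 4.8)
The plan is to show that at least one optimal solution of QBOP lies among $S_1,\dots,S_k$. Since $S_{\pi(k)}$ is by definition the member of $\{S_1,\dots,S_k\}$ with smallest range $Z(C,\cdot)$, establishing this fact immediately gives that $S_{\pi(k)}$ is itself optimal. I would carry this out by contradiction, using properties (P1) and (P2) together with the defining inequality for $\Omega$.

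First I would set up the contradiction hypothesis: suppose $S_{\pi(k)}$ is \emph{not} optimal. By (P1) there exists an optimal solution $S_j\in\ff^*$, and the optimality of $S_j$ together with the non-optimality of $S_{\pi(k)}$ gives $Z(C,S_{\pi(k)})>Z(C,S_j)$. I would then observe that $j>k$: if instead $j\le k$, then $S_j$ is one of the first $k$ solutions, so by the minimality defining $\pi(k)$ we would have $Z(C,S_{\pi(k)})\le Z(C,S_j)$, contradicting the strict inequality just obtained. Hence $j>k$.

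The key step is then to bound $Z(C,S_j)$ from below. Because $j>k$, the strict monotonicity in (P2) yields $\Z(C,S_j)>\Z(C,S_k)$; and because $S_j$ is an optimal solution in $\ff^*$, the definition of $\Omega$ gives $\y(C,S_j)\le\Omega$. Combining these,
$$Z(C,S_j)=\Z(C,S_j)-\y(C,S_j)\ge \Z(C,S_j)-\Omega>\Z(C,S_k)-\Omega.$$
Rewriting the hypothesis of the theorem as $Z(C,S_{\pi(k)})\le \Z(C,S_k)-\Omega$ and appending it to this chain produces $Z(C,S_j)>Z(C,S_{\pi(k)})$. This contradicts the optimality of $S_j$, since $S_{\pi(k)}\in\ff$ is feasible with strictly smaller objective. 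The contradiction completes the proof.

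I expect the main obstacle to be purely in the bookkeeping of inequality directions and in applying each ingredient in exactly the right spot: the strict separation from (P2) must be used to compare $\Z(C,S_j)$ with $\Z(C,S_k)$, whereas the bound involving $\Omega$ may be applied only to the optimal solution $S_j$ (it is not a bound on $\y$ for arbitrary feasible solutions). It is the conjunction of these two facts with the stated hypothesis that forces the strict inequality contradicting optimality; getting the order of these applications right is the whole content of the argument.
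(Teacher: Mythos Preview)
Your proof is correct and follows essentially the same contradiction scheme as the paper: assume $S_{\pi(k)}$ is not optimal, pick an optimal $S_j\in\ff^*$, argue $j>k$, and then combine (P2), the bound $\y(C,S_j)\le\Omega$, and the hypothesis to obtain a contradiction. The only cosmetic difference is that the paper chains the inequalities toward $\Z(C,S_j)\le\Z(C,S_k)$ (contradicting (P2)), whereas you chain them toward $Z(C,S_j)>Z(C,S_{\pi(k)})$ (contradicting optimality of $S_j$); these are the same argument read in opposite directions.
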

\begin{proof}Suppose $S_{\pi(k)}$ is not an optimal solution to QBOP\@. Then there is an optimal solution $S_i$ to QBOP in $\ff^*$ such that $i > k$. Thus, $\Z(C,S_i) - \y(C,S_i) < \Z(C,S_{\pi(k)})-\y(C,S_{\pi(k)})$. Then
\begin{align*}
\Z(C,S_i) & < \y(C,S_i) + \Z(C,S_{\pi(k)})-\y(C,S_{\pi(k)})\\
&\leq \Omega + \Z(C,S_{\pi(k)})-\y(C,S_{\pi(k)})\\
&\leq \Z(C,S_k).
\end{align*}
 Thus, by (P2), $i \leq k$, a contradiction.
\end{proof}

\begin{theorem}\label{tth2} If $S_{\pi(k)}$ is not an optimal solution to QBOP then there exists an optimal solution $S_q\in \ff^*$ such that $q > k$ and $\y(C,S_q) > \Z(C,S_k)-\Z(C,S_{\pi(k)})+\y(C,S_{\pi(k)})$
\end{theorem}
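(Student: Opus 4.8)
The plan is to exploit the definition of $\pi(k)$ together with properties (P1) and (P2), running the argument of Theorem~\ref{tth1} in the complementary direction. First I would observe that, since $\pi(k)$ is chosen so that $\Z(C,S_{\pi(k)})-\y(C,S_{\pi(k)})$ is the minimum of $\Z(C,S_i)-\y(C,S_i)$ over $1\leq i\leq k$, the assumption that $S_{\pi(k)}$ is not optimal forces \emph{every} one of $S_1,\ldots,S_k$ to be non-optimal: each such $S_i$ has $Z$-value at least that of $S_{\pi(k)}$, which already strictly exceeds the optimal value of QBOP. By (P1) there exists an optimal solution $S_q\in\ff^*$, and the preceding observation then guarantees $q>k$, which establishes the first claimed conclusion.

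It remains to derive the stated lower bound on $\y(C,S_q)$. I would extract two inequalities. From the optimality of $S_q$ and the non-optimality of $S_{\pi(k)}$ we get the strict comparison of objective values
\[
\Z(C,S_q)-\y(C,S_q) < \Z(C,S_{\pi(k)})-\y(C,S_{\pi(k)}),
\]
while from $q>k$ together with (P2) we obtain
\[
\Z(C,S_q) > \Z(C,S_k).
\]

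The final step simply chains these together. Rearranging the first display gives $\Z(C,S_q) < \y(C,S_q)+\Z(C,S_{\pi(k)})-\y(C,S_{\pi(k)})$; combining this with the second display yields $\Z(C,S_k) < \y(C,S_q)+\Z(C,S_{\pi(k)})-\y(C,S_{\pi(k)})$, which rearranges exactly into the desired bound $\y(C,S_q) > \Z(C,S_k)-\Z(C,S_{\pi(k)})+\y(C,S_{\pi(k)})$. I do not expect a genuine obstacle here; the only point needing care is the strictness of the first inequality, which is legitimate because an optimal $S_q$ has strictly smaller objective value than the non-optimal $S_{\pi(k)}$. Every remaining manipulation is routine algebra and mirrors the chain of inequalities already appearing in the proof of Theorem~\ref{tth1}, so the work is essentially the bookkeeping of carrying these two facts through to the claimed form.
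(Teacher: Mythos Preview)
Your proof is correct and follows essentially the same approach as the paper's: both use (P1) and the minimality in the definition of $\pi(k)$ to get $q>k$, then combine the monotonicity from (P2) with the objective-value comparison between $S_q$ and $S_{\pi(k)}$. The only cosmetic difference is that the paper argues the second part by contradiction (assuming $\y(C,S_q)\leq \Z(C,S_k)-\Z(C,S_{\pi(k)})+\y(C,S_{\pi(k)})$ and deriving that $S_{\pi(k)}$ would then be optimal), whereas you chain the two inequalities directly; the underlying logic is identical.
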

\begin{proof}
Since $S_{\pi(k)}$ is not optimal, by property (P1) there exists an optimal solution $S_q$ in $F^*$ such that $q > k$. Suppose $\y(C,S_q) \leq \Z(C,S_k)-\Z(C,S_{\pi(k)})+\y(C,S_{\pi(k)})$. Then $\y(C,S_q) \leq \Z(C,S_q)-\Z(C,S_{\pi(k)})+\y(C,S_{\pi(k)})$. Thus, $\Z(C,S_q) - \y(C,S_q) \geq \Z(C,S_{\pi(k)})- \y(C,S_{\pi(k)})$ establishing that $S_{\pi(k)}$ is also an optimal solution to QBOP, a contradiction.
\end{proof}
Theorems~\ref{tth1} and \ref{tth2} assist us in improving the average performance of our algorithms. Corresponding results can be obtained by considering solutions that satisfy another set of properties. Suppose $\ff^0=\{S_1,S_2,\ldots ,S_h\}$ be a subset of $\ff$ satisfying the following properties.
\begin{itemize}[leftmargin=3em,labelindent=3em]
\item[(P3)] There exists an $S_i\in \ff^0$ which is an optimal solution to QBOP
\item[(P4)] $\y(C,S_1) > \y(C,S_2) > \cdots > \y(C,S_h)$.
\end{itemize}
Choose an index $\sigma(k)$ such that $\Z(C,S_{\sigma(k)})-\y(C,S_{\sigma(k)})=\min\{\Z(C,S_{i})-\y(C,S_{i}): 1\leq i \leq k\}$. Then $S_{\sigma(h)}$ is an optimal solution to QBOP\@. Let $\Delta$ be a real number such that $\Delta \leq \Z(C,S_i)$ for any optimal solution $S_i$ for QBOP in $\ff^0$.

\begin{theorem}\label{th3}For any $1\leq k \leq h$, if $\Delta-\Z(C,S_{\sigma(k)})+\y(C,S_{\sigma(k)})\geq  \y(C,S_k)$ then $S_{\sigma(k)}$ is an optimal solution to QBOP\@.
\end{theorem}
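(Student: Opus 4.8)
The plan is to prove the statement by contradiction, mirroring the argument for Theorem~\ref{tth1} but with the roles of $\Z$ and $\y$ interchanged to match the ordering in (P4). I would assume $S_{\sigma(k)}$ is not optimal for QBOP. By property (P3) there is some optimal solution in $\ff^0$; call it $S_i$. The first step is to observe that necessarily $i > k$: if instead $i \leq k$, then by the defining minimality of $\sigma(k)$ we would have $\Z(C,S_{\sigma(k)}) - \y(C,S_{\sigma(k)}) \leq \Z(C,S_i) - \y(C,S_i)$, forcing $S_{\sigma(k)}$ to attain the optimal range as well, contrary to our assumption.

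Next I would exploit that $S_i$ is strictly better than $S_{\sigma(k)}$, i.e.
\[
\Z(C,S_i) - \y(C,S_i) < \Z(C,S_{\sigma(k)}) - \y(C,S_{\sigma(k)}).
\]
Rearranging to isolate $\y(C,S_i)$ gives $\y(C,S_i) > \Z(C,S_i) - \Z(C,S_{\sigma(k)}) + \y(C,S_{\sigma(k)})$. Since $S_i$ is optimal, the defining property of $\Delta$ supplies $\Z(C,S_i) \geq \Delta$, so I can replace $\Z(C,S_i)$ by $\Delta$ to weaken this to $\y(C,S_i) > \Delta - \Z(C,S_{\sigma(k)}) + \y(C,S_{\sigma(k)})$. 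Invoking the hypothesis of the theorem, the right-hand side is at least $\y(C,S_k)$, hence $\y(C,S_i) > \y(C,S_k)$.

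Finally, I would close the contradiction using (P4): since $i > k$ and the sequence $\y(C,S_1) > \y(C,S_2) > \cdots > \y(C,S_h)$ is strictly decreasing, we must have $\y(C,S_i) < \y(C,S_k)$, directly clashing with the inequality just derived. This contradiction establishes that $S_{\sigma(k)}$ is optimal.

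The step I expect to require the most care is establishing $i > k$ at the outset — it is the only place where the minimality built into the definition of $\sigma(k)$ is used, and it is precisely what makes (P4) applicable in the final contradiction. After that, the chain of inequalities is a routine rearrangement, with the only substantive inputs being the definition of $\Delta$ (to introduce the lower bound $\Delta \leq \Z(C,S_i)$) and the theorem's hypothesis.
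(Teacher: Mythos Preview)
Your proof is correct and follows exactly the route the paper intends: the paper omits the proof of Theorem~\ref{th3}, stating only that it ``can be obtained along the same lines as that of Theorem~\ref{tth1},'' and your argument is precisely the natural dual of that proof with $\y$ and $\Z$ (and $\Delta$ and $\Omega$) swapped to match properties (P3)--(P4). In fact, you are slightly more careful than the paper's proof of Theorem~\ref{tth1}, since you explicitly justify why the optimal $S_i$ must satisfy $i > k$ via the minimality built into $\sigma(k)$, whereas the paper simply asserts the analogous fact for $\pi(k)$.
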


\begin{theorem}\label{th4}If $S_{\sigma(k)}$ is not an optimal solution to QBOP then there exists an optimal solution $S_d\in \ff^0$ such that $d > k$ and $\Z(C,S_d) < \Z(C,S_{\sigma(k)})-\y(C,S_{\sigma(k)})+\y(C,S_k)$
\end{theorem}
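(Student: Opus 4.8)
The plan is to prove Theorem~\ref{th4} by the same contradiction scheme used for Theorem~\ref{tth2}, with the roles of $\Z$ and $\y$ interchanged: here the monotone quantity is $\y$ (property (P4)) rather than $\Z$ (property (P2)), and the bound to be derived is on $\Z(C,S_d)$ rather than on $\y(C,S_q)$. Throughout I would write $Z(C,S)=\Z(C,S)-\y(C,S)$ for the QBOP objective and keep in mind that $\sigma(k)$ is, by definition, the index of a minimum-objective solution among $S_1,\dots,S_k$. Note that $\Delta$ plays no role in this statement (unlike in Theorem~\ref{th3}), so I would not invoke it.

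First I would locate an optimal solution with large index. Since $S_{\sigma(k)}$ achieves $\min\{Z(C,S_i):1\le i\le k\}$, if $S_{\sigma(k)}$ is not optimal then none of $S_1,\dots,S_k$ is optimal. By (P3) an optimal solution exists in $\ff^0$, so it must occur at some index $d>k$; fix such an $S_d$. This is exactly the analogue of the opening step in the proof of Theorem~\ref{tth2}.

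Next I would assume, for contradiction, that the claimed strict inequality fails, i.e.\ $\Z(C,S_d)\ge \Z(C,S_{\sigma(k)})-\y(C,S_{\sigma(k)})+\y(C,S_k)$. Because $d>k$, property (P4) gives $\y(C,S_d)<\y(C,S_k)$. Substituting this into the right-hand side and rearranging yields $\Z(C,S_d)-\y(C,S_d) > \Z(C,S_{\sigma(k)})-\y(C,S_{\sigma(k)})$, that is $Z(C,S_d)>Z(C,S_{\sigma(k)})$. Thus $S_d$ has strictly larger objective than $S_{\sigma(k)}$, contradicting the optimality of $S_d$. Hence the strict inequality in the statement must hold.

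The only thing requiring care --- the ``hard part,'' though it is minor --- is tracking the direction of the inequalities so that the monotonicity supplied by (P4) converts the assumed failure of the bound into a strict gain for $S_{\sigma(k)}$ over the optimal $S_d$; the argument is otherwise a direct mirror of Theorem~\ref{tth2} and involves no new ideas.
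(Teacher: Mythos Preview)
Your proof is correct and follows precisely the approach the paper indicates: it is the mirror image of the proof of Theorem~\ref{tth2}, with property (P4) in place of (P2) and the roles of $\Z$ and $\y$ interchanged. The only cosmetic difference is that you use the strict inequality from (P4) to derive $Z(C,S_d)>Z(C,S_{\sigma(k)})$ and contradict the optimality of $S_d$, whereas the paper's proof of Theorem~\ref{tth2} phrases the contradiction as $S_{\pi(k)}$ being ``also optimal''; both formulations are equivalent here.
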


The proofs of Theorems \ref{th3} and \ref{th4} can be obtained along the same lines as that of Theorems~\ref{tth1} and \ref{tth2} and hence are omitted.

Conditions similar to Theorems \ref{tth1} to \ref{th4} have been used by many authors in the context of different optimization problems involving linear terms~\cite{b18, b13, lp, b17, b9}. The effect of such conditions are not tested in the context of quadratic type problems. One of the goals of our experimental analysis was to test the efficacy of Theorems \ref{tth1} to \ref{th4} in the development of practical algorithms.

\subsection{The double threshold algorithm}
The basic idea of this algorithm is similar to that used by Martello et al.~\cite{Martello1984} for solving LBOP\@. The difference, however, is that we are using the quadratic feasibility problem discussed in the last section instead of a simple feasibility test considered in~\cite{Martello1984} for LBOP. This is a significant deviation as it alters the problem complexity substantially. The validity proof of our algorithm follows along the same line as that of the double threshold algorithm for LBOP discussed in~\cite{Martello1984}. We also use the conditions provided in Theorems~\ref{tth1} and \ref{tth2} to enhance our search for an optimal solution.

Let $w_1<w_2<\cdots <w_p$ be an ascending arrangement of distinct elements of the cost matrix $C$ and $w_{p+1}=\infty$. These $w_i$ values are the candidates for $\Z(C,S)$ and $\y(C,S)$ for any feasible solution $S$. The algorithm performs a  bottom-up sequential search by maintaining a lower threshold $L$ and an upper threshold $U$, and tests if $F(C,L,U)\neq \emptyset$. If the answer is `yes', the lower threshold is increased and if the answer is `no', the upper threshold is increased.  The lower and upper threshold values are chosen amongst $\{w_1,w_2,\ldots, w_p\}$. At any stage of the algorithm, if a feasible solution is obtained with the QBOP objective function value as zero, the algorithm is terminated since we have an optimal solution. Let $L = w_{\ell}$ and $U=w_u$ for some $\ell$ and $u$, $\ell\leq u$. If $F(C,w_{\ell},w_u)=\emptyset$ then $U$ is increased to $w_{u +1}$. Otherwise, we choose an $S\in F(C,w_{\ell},w_u)$ and $L$ can be increased to $w_{v+1}$, where $w_v=\min\{c_{ij} : (i,j)\in S\times S\}$, and the best solution identified so far is updated, if necessary. Note that $w_v \geq w_{\ell}$.

We also try to exploit the conditions of Theorems~\ref{tth1} and \ref{tth2} for early detection of an optimal solution or rapid increase in the lower threshold (and hence, possibly the upper threshold). Let $t \leq p$  be the total number of times $L$ is updated and let $\ff^*=\{S_1,S_2,\ldots ,S_{t}\}$ be the set of solutions generated. The indexes are selected such that $S_i$ is generated before $S_{i+1}$. Then $\ff^*$ satisfies the properties (P1) and (P2). Thus, the sufficient condition of Theorem~\ref{tth1} can be used to detect optimality in any iteration, whenever the condition is satisfied. If it  is satisfied then the best solution identified so far is indeed optimal and the algorithm terminates. Otherwise, we try to increase the lower threshold $L$ (and hence possibly the upper threshold $U$) rapidly using the conditions of Theorem~\ref{tth2}. If the algorithm is not terminated using any of the conditions discussed above, then the search completes  when $U$ or $L$ becomes $\infty$ and the best solution produced during the search is selected as the output which is an optimal solution to QBOP. A formal description of the bottom-up double threshold algorithm (BDT algorithm) is given in Algorithm~\ref{bdt}.

\begin{algorithm}[htb]
\caption{The BDT Algorithm}
\label{bdt}
        Let $w_1 <w_2 < \cdots < w_p$ be an ascending arrangement of distinct values of $e_{ij}$ for $ (i,j) \in E\times E $\;
        $l\leftarrow 1$; $u\leftarrow 1$; $\mathit{sol} \leftarrow\emptyset$; $\mathit{obj} \leftarrow\infty$\;
        Compute the parameter $\Omega$\tcc*[r]{See Section 6.3 for choice of $\Omega$}
        \While{$l \leq p$ and $u\leq p$}
        {
        \eIf{$F(C,w_l,w_u) \neq \emptyset$}
        {
        Choose an $S\in F(C,w_l,w_u)$\;
        \lIf{$Z(C, S) < obj$}{$\mathit{obj} \leftarrow Z(C, S)$; $\mathit{sol} \leftarrow S$}
        \lIf{$\mathit{obj} = 0$ or $\mathit{obj} + \Omega \leq w_t$}{\Return sol}
        Choose smallest $k$ such that $w_k > \max\{Z_\text{min}(C, S), Z_\text{max}(C, S) - \mathit{obj}\}$\;
        $l \leftarrow k$\;
        \lIf{$ w_l > w_u$}{$u\leftarrow k$}
        }
        {$u\leftarrow u+1$}
        }
        \Return{$\mathit{obj}$ and $\mathit{sol}$}
\end{algorithm}

With the assumption that the dominating complexity of this algorithm in each iteration is the complexity of testing the condition if $F(C,w_l,w_u) \neq \emptyset$ or not. If this test can be performed in $O(\phi_1(m))$ time, then the BDT-algorithm terminates in $O(m\phi_1(m))$ time. For most problems of practical interest, testing if $F(C,w_l,w_u) \neq \emptyset$ or not is NP-hard. By performing this test using heuristic algorithms, we can get a heuristic algorithm to solve QBOP\@. The computational issues associated with this approach are discussed in detail in the section of experimental analysis of algorithms.

Just like the BDT algorithm, it is possible to obtain another double threshold algorithm using a top-down search. In this case, we start with the upper and lower threshold values at $w_p$ and systematically decrease the threshold values. The algorithm makes use of Theorems~\ref{th3} and \ref{th4} to improve the search process, along similar lines as in the BDT algorithm where the threshold values are systematically increased. The resulting algorithm is called top-down double threshold algorithm (TDT algorithm). The detailed description of various steps of this algorithm can be easily constructed in view of the BDT algorithm and, therefore, omitted.

\subsection{Iterative bottleneck algorithms}

Let us now discuss two additional algorithms for solving QBOP which solve a sequence of quadratic bottleneck problems. The worst case complexities of these algorithms , in general, are higher than that of the BDT-algorithm, but their average performance is expected to be better. A quadratic bottleneck problem of type 1 (QBP1) is defined as
\begin{mathprog}[QBP1:]
\progline[Minimize]{\Z(C,S)}
\progline[subject to]{S \in \ff.}
\end{mathprog}

%
%
%
%
%
%
We denote an instance of QBP1 with cost matrix $C$ as QBP1($C$). The problem QBP1 was investigated by Burkard~\cite{bur} and Punnen and Zhang~\cite{pz}.

To develop our iterative bottleneck algorithms, we consider a generalization of QBOP, where  a  restriction on the lower threshold is imposed on the feasible solutions.  Consider the problem

\begin{mathprog}[QBOP1($C,\alpha$):]
\progline[Minimize]{\Z(C,S)-\y(C,S)}
\progline[subject to]{S \in \ff,}
\progline{\y(C,S) \geq \alpha}
\end{mathprog}

When $\alpha = \min\{c_{ij} : (i,j) \in E\times E\}$  QBOP1($C,\alpha$) reduces to QBOP(C).
Let $C^{\prime}$ be an $m\times m$ matrix defined by
\begin{equation*}
c^{\prime}_{ij} = \begin{cases}
M &\mbox{ if $c_{ij} < \alpha$},\\
c_{ij} & \mbox{ otherwise,}
\end{cases}
\end{equation*}
where $M$ is a large number.
\begin{theorem}\label{the1}Let $S^0$ be an optimal solution to QBP1 with cost matrix $C^{\prime}$ and $q$ be the index such that $w_q = \y(C,S^0)$.
\vspace{-2 mm}
\begin{enumerate}[label=(\arabic*)]
\item If $\Z(C,S^0) = M$ then QBOP1($C,\alpha$) is infeasible.
\item If $\Z(C,S^0) < M$ and $\Z(C,S^0)=\y(C,S^0)$ then $S^0$ is an optimal solution to QBOP1($C,\alpha$).
\item If conditions ($1$) and ($2$) above are not satisfied, then either $S^0$ is an optimal solution to QBOP1($C,\alpha$) or  an optimal solution to QBOP1($C^{\prime},\gamma$) is also optimal to QBOP1($C,\alpha$), where $\gamma=w_{q+1}$.
\end{enumerate}
\end{theorem}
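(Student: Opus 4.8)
The plan is to route all three parts through the single auxiliary matrix $C^{\prime}$, using one elementary dictionary. First I would fix $M$ large (say $M>w_p$) and record the following equivalences for any $S\in\ff$: the solution $S$ is feasible for QBOP1($C,\alpha$), i.e.\ $\y(C,S)\geq\alpha$, if and only if $S$ contains no pair $(i,j)$ with $c_{ij}<\alpha$, if and only if $\Z(C^{\prime},S)<M$; and on every such $S$ the matrices $C$ and $C^{\prime}$ agree on $S\times S$, so that $\Z(C,S)=\Z(C^{\prime},S)$ and $\y(C,S)=\y(C^{\prime},S)$. (This is also where I would note that the ``$=M$'' and ``$<M$'' tests in the statement are the tests on $\Z(C^{\prime},S^0)$, since $\Z(C,S^0)$ built from the original entries never reaches $M$.)

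Granting this dictionary, parts (1) and (2) are short. For (1): since $S^0$ minimises $\Z(C^{\prime},\cdot)$ over $\ff$ and every $\Z(C^{\prime},S)\leq M$, the hypothesis $\Z(C^{\prime},S^0)=M$ forces $\Z(C^{\prime},S)=M$ for all $S\in\ff$, so every $S$ contains a pair below $\alpha$ and, by the dictionary, QBOP1($C,\alpha$) is infeasible. For (2): $\Z(C^{\prime},S^0)<M$ makes $S^0$ feasible, and $\Z(C,S^0)=\y(C,S^0)$ gives it objective $0$, the minimum of a nonnegative objective, so $S^0$ is optimal.

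The substance is part (3), and the fact that drives it is that $S^0$ minimises $\Z(C,\cdot)$ over the feasible region of QBOP1($C,\alpha$): for feasible $S$, $\Z(C,S)=\Z(C^{\prime},S)\geq\Z(C^{\prime},S^0)=\Z(C,S^0)$. From this I get a lower bound: any feasible $S$ with $\y(C,S)\leq w_q$ satisfies $\Z(C,S)-\y(C,S)\geq\Z(C,S^0)-w_q$, which is exactly the objective of $S^0$. Hence if $S^0$ is not optimal, every optimiser $S^*$ of QBOP1($C,\alpha$) must have $\y(C,S^*)\geq w_{q+1}=\gamma$. Such an $S^*$ is free of forbidden pairs, so $\y(C^{\prime},S^*)=\y(C,S^*)\geq\gamma$, making it feasible for QBOP1($C^{\prime},\gamma$) with identical objective; thus the optimal value of QBOP1($C^{\prime},\gamma$) is at most that of QBOP1($C,\alpha$).

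Finally I would run the converse. Let $\tilde S$ be optimal for QBOP1($C^{\prime},\gamma$). Its objective is bounded by the small value just produced, whereas any solution containing a forbidden pair has objective at least $M-w_p$; so for $M$ chosen large enough $\tilde S$ must avoid forbidden pairs. Then $\y(C,\tilde S)=\y(C^{\prime},\tilde S)\geq\gamma>w_q\geq\alpha$, so $\tilde S$ is feasible for QBOP1($C,\alpha$) with the same objective value in both problems; comparing the two optimal values (each bounding the other through $S^*$ and through $\tilde S$) forces equality, so $\tilde S$ is optimal for QBOP1($C,\alpha$), the second alternative. I expect the main obstacle to be the bookkeeping around the $M$-entries: confirming that genuinely optimal (not merely feasible) solutions of QBOP1($C^{\prime},\gamma$) avoid forbidden pairs, and checking that the split $\y(C,S^*)\leq w_q$ versus $\y(C,S^*)\geq w_{q+1}$ is exhaustive because the $w_i$ exhaust the distinct cost values. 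Everything else reduces to the dictionary and two one-line optimality comparisons.
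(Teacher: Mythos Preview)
Your argument is essentially the paper's: for part~(3) the paper defines $\chi=\{S\in\ff:\alpha\le\y(C,S)\le\y(C,S^0)\}$ and combines $\y(C,S^0)\ge\y(C,S)$ with $\Z(C,S^0)\le\Z(C,S)$ on $\chi$ to conclude $S^0$ dominates there, so any strictly better optimum must have $\y(C,\cdot)>\y(C,S^0)$---exactly your lower-bound step---and then writes ``the result follows'' without spelling out the converse you add. One small caveat in your extra detail: the bound ``any solution containing a forbidden pair has $C'$-objective at least $M-w_p$'' misses the degenerate case where \emph{every} pair of $\tilde S$ is forbidden (then the $C'$-objective is $0$ regardless of $M$), so you should rule that case out separately.
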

\begin{proof} The proof of (1) and (2) are straightforward. Let us now prove (3).
Let $\chi=\{ S \in \ff :  \alpha \leq \y(C,S) \leq \y(C,S^0)\}$.
By definition of $\chi$
\begin{equation}\label{eq1}
\y(C,S^0) \geq \y(C,S) \mbox{ for all } S\in \chi.
\end{equation}
Since condition ($1$) of the theorem is not satisfied, by optimality of $S^0$ to QBP1 with cost matrix $C^{\prime}$, we have
\begin{equation}\label{eq2}
\Z(C,S^0) \leq \Z(C,S) \mbox{ for all } S\in \chi.
\end{equation}
Multiply inequality (\ref{eq1}) by $-1$ and adding to inequality (\ref{eq2}) we have
$Z(C,S^0) \leq Z(C,S)$ for all $S\in \chi$. Thus, either $S^0$ is an optimal solution to QBOP1(C,$\alpha$) or there exists an optimal solution $S$ to QBOP1($C,\alpha$) satisfying $\y(C,S) > \y(C,S^0)$ and the result follows.
\end{proof}

In view of Theorem~\ref{the1}, we can solve QBOP as a sequence of QBP1 problems. In each iteration, the algorithm maintains  a lower threshold $\alpha$ and constructs a modified cost matrix $C^{\prime}$ which depends on the value of $\alpha$. Then, using an optimal solution to QBP1 with cost matrix $C^{\prime}$, the lower threshold is systematically increased until infeasibility with respect to the threshold values is reached or optimality of one of the solutions generated so far is identified using condition (2) of Theorem~\ref{the1}. Let $F^1=\{S_{\rho_1},S_{\rho_2}, \ldots ,S_{\rho_t}\}$ be the set of solutions generated for various QBP1 problems, where $S_{\rho_i}$ is generated before $S_{\rho_{i+1}}$, $1\leq i \leq t-1$. Then by choosing $F^*=F^1$, these solutions satisfy properties (P1) and (P2). Thus, Theorem~\ref{tth1} can be used to detect optimality early and Theorem \ref{tth2} may be used to increase the lower threshold rapidly. If the algorithm is not terminated using an optimality condition, it compares all the solutions generated by the QBP1 solver and outputs the overall best solution with respect to the QBOP objective function. The resulting algorithm is called the {\it type 1 iterative bottleneck algorithm} (IB1 algorithm) and its formal description  is given in Algorithm~\ref{ib1}.

\vspace{4 mm}
\begin{algorithm}[H]
\caption{The IB1 Algorithm}
\label{ib1}

        Let $w_1 < w_2 < \cdots <w_p$ be an ascending arrangement of all distinct values of $\{c_{ij} : (i,j) \in E \times E \}$\;
        $C^ \prime \leftarrow C$; $\mathit{obj} \leftarrow \infty$ ; $\mathit{sol} \leftarrow \emptyset$, $M \leftarrow 1+w_p$; $z_0=w_p$\;
        Compute the parameter $\Omega$\;
        \While{$z_0 \neq M$}
        {
        Solve QBP1($C^{\prime}$)\;
        \lIf{If QBP1($C^{\prime}$) is infeasible}{\Return{$\emptyset$ and $\infty$}}
        \lElse{let $S$ be the solution of QBP1($C^{\prime}$)}
        $z_0\leftarrow \max\{c^{\prime}_{ij} : (i,j) \in S\times S\}$\;
        \If{$z_0< M$}
        {
        \lIf{$Z(C, S) < \mathit{obj}$}{$sol \leftarrow S$;
        $\mathit{obj} \leftarrow Z(C, S)$}
        \lIf{obj $ = 0$ or $\mathit{obj} + \Omega \leq z_0$}{\Return $\mathit{obj}$ and $\mathit{sol}$}
        $L\leftarrow\max\{Z_\text{min}(C,S),Z_\text{max}(C, S) - \mathit{obj}\}$\;

        $c^\prime _{ij}  \leftarrow  \left\{ \begin{array}{ll}
   c _{ij} \quad & \text{if } c_{ij} > L, \\
   M \quad & \text{otherwise} \end{array} \right.$ for each $i, j \in E$\;
        }
        }
        \Return{$\mathit{obj}$ and $\mathit{sol}$}
        \end{algorithm}

\vspace{4 mm}

The IB1 algorithm solves at most $m$ problems of the type QBP1. Thus, if QBP1 can be solved in $O(\phi_2(m))$ time, then QBOP can be solved in $O(m\phi_2(m))$ time. By solving QBP1 using a heuristic, we get a heuristic version of the IB1 algorithm.

QBOP can also be solved as a sequence of quadratic bottleneck problems of the maxmin type, which we call a {\it quadratic bottleneck problem of type 2} (QBP2). Formally, QBP2 can be stated as follows:
\begin{mathprog}[QBP2:]
\progline[Maximize]{\y(C,S)}
\progline[subject to]{S\in \ff.}
\end{mathprog}

QBP2 can be reformulated as QBP1 or the algorithms for QBP1~\cite{pz} can be modified to solve QBP2 directly.

Now, for any real number $\beta$, consider the problem:
\begin{mathprog}[QBOP2($C,\beta$):]
\progline[Minimize]{\Z(C,S)-\y(C,S)}
\progline[subject to]{S \in \ff,}
\progline{\Z(C,S) \leq \beta}
\end{mathprog}
When $\beta = \max\{c_{ij} : (i,j) \in E\times E\}$, QBOP2($C,\beta$) reduces to QBOP(C).
Define the cost matrix $\tilde{C}$ defined by
\begin{equation*}
\tilde{c}_{ij} = \begin{cases}
-M &\mbox{ if $c_{ij} > \beta$ or $c_{ij} < \alpha$} \\
c_{ij} & \mbox{ otherwise.}
\end{cases}
\end{equation*}
where $M$ is a large number.
\begin{theorem}\label{the2}Let $S^0$ be an optimal solution to QBP2 with cost matrix $\tilde{C}$  and $r$ be the index such that $w_r = \Z(C,S^0)$.
\vspace{-1.5 mm}
\begin{enumerate}
\item If $\y(\tilde{C},S^0) = -M$ then QBOP2(C,$\beta$) is infeasible.
\item If $\y(\tilde{C},S^0) > -M$ and $\Z(C,S^0)=\y(C,S^0)$ then $S^0$ is an optimal solution to QBOP2(C,$\beta$).
\item If conditions ($1$) and ($2$) are not satisfied, then either $S^0$ is an optimal solution to QBOP2(C,$\beta$) or an optimal solution to QBOP2($\tilde{C},\gamma$)  is also optimal to QBOP2(C,$\beta$) where $\gamma=w_{r+1}$.
\end{enumerate}
\end{theorem}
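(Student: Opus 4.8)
The plan is to prove Theorem~\ref{the2} by exploiting the exact duality between the maxmin problem QBP2 and the minmax problem QBP1, so that the argument mirrors the proof of Theorem~\ref{the1} with the roles of $\Z$ and $\y$, and of the thresholds $\alpha$ and $\beta$, interchanged (and the penalty $M$ replaced by $-M$). Parts~(1) and~(2) are immediate. If no $S\in\ff$ has all of its pair-costs inside $[\alpha,\beta]$, then every solution contains a pair with $\tilde c_{ij}=-M$, forcing $\y(\tilde C,S)=-M$ for all $S$ and hence $\y(\tilde C,S^0)=-M$, which is exactly the signal that no solution meets the current thresholds; this is part~(1). If instead $\y(\tilde C,S^0)>-M$ and $\Z(C,S^0)=\y(C,S^0)$, then $S^0$ has range $0$ and is trivially optimal, which is part~(2).

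The first substantive step is to translate optimality of $S^0$ for QBP2($\tilde C$) into a statement about $C$. Because every entry outside $[\alpha,\beta]$ is set to $-M$, any $S$ containing such an entry has $\y(\tilde C,S)=-M$, whereas any $S$ whose pair-costs all lie in $[\alpha,\beta]$ has $\y(\tilde C,S)=\y(C,S)$. Since condition~(1) is assumed to fail, $\y(\tilde C,S^0)>-M$, so $S^0$ itself avoids the penalized entries and $\y(\tilde C,S^0)=\y(C,S^0)$; moreover $S^0$ then maximizes $\y(C,\cdot)$ over precisely the solutions that are feasible for QBOP2($C,\beta$), namely those with $\alpha\le\y(C,S)$ and $\Z(C,S)\le\beta$.

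The core is then the two-inequality argument, dual to the one used for Theorem~\ref{the1}. I would introduce the window $\chi=\{S\in\ff:\alpha\le\y(C,S)\text{ and }\Z(C,S^0)\le\Z(C,S)\le\beta\}$. For every $S\in\chi$ the definition of $\chi$ gives $\Z(C,S^0)\le\Z(C,S)$, while the optimality of $S^0$ for QBP2($\tilde C$)---valid because such $S$ avoid the $-M$ entries, so $\y(\tilde C,S)=\y(C,S)$---gives $\y(C,S)\le\y(C,S^0)$. Adding these two inequalities yields $Z(C,S^0)\le Z(C,S)$ for all $S\in\chi$, so $S^0$ is optimal over $\chi$. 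Hence either $S^0$ is already optimal for QBOP2($C,\beta$), or any strictly better feasible solution must lie outside $\chi$; being feasible, it can only violate the lower end of the window, i.e.\ satisfy $\Z(C,S)<\Z(C,S^0)=w_r$. These are exactly the feasible solutions of the tightened problem QBOP2($\tilde C,\gamma$) obtained by moving the upper threshold past $\Z(C,S^0)$ to the adjacent distinct cost value $\gamma$, and since this threshold change leaves their ranges unchanged, optimality transfers back to QBOP2($C,\beta$). This is part~(3).

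The step I expect to be the main obstacle is the $\pm M$ bookkeeping that converts optimality of $S^0$ in the penalized matrix $\tilde C$ into the clean inequality $\y(C,S)\le\y(C,S^0)$ holding for all $S\in\chi$: one must check both that every member of $\chi$ avoids the $-M$ entries (so that $\y(\tilde C,\cdot)$ and $\y(C,\cdot)$ coincide there) and that $S^0$ does likewise, the latter being where the failure of condition~(1) is used. Once the window $\chi$ is chosen as the correct dual of the one in Theorem~\ref{the1}, the addition of the two inequalities and the transfer of optimality to the tightened problem are routine.
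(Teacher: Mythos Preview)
Your proposal is correct and follows exactly the route the paper indicates: the paper omits the proof of Theorem~\ref{the2} and simply states it ``can be constructed by appropriate modifications in the proof of Theorem~\ref{the1},'' which is precisely what you do---swapping $\Z$ with $\y$, $\alpha$ with $\beta$, $M$ with $-M$, and taking the dual window $\chi=\{S\in\ff:\alpha\le\y(C,S),\ \Z(C,S^0)\le\Z(C,S)\le\beta\}$ in place of the one in Theorem~\ref{the1}. The two-inequality argument over $\chi$ and the conclusion that any strictly better feasible solution must have $\Z(C,S)<w_r$ are the exact analogues of \eqref{eq1}--\eqref{eq2}, so your write-up matches the paper's intended proof.
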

The proof of this theorem can be constructed by appropriate modifications in the proof of Theorem~\ref{the1} and hence is omitted.

In view of Theorem~\ref{the2}, we can solve QBOP as a sequence of QBP2 problems. In each iteration, the algorithm maintains a  lower threshold $\alpha$  and an upper threshold $\beta$ and construct a modified cost matrix $\tilde{C}$ which depends on the value of $\alpha$ and $\beta$.  Using an optimal solution to QBP2 with cost matrix $\tilde{C}$, the upper threshold is systematically decreased  and the process is continued until infeasibility  with respect to the threshold values is reached or optimality of one of the solutions generated so far  is identified using condition (2) of Theorem~\ref{the2}. Let $F^2=\{S_{\eta_1},S_{\eta_2}, \ldots ,S_{\eta_t}\}$ be the set of solutions generated for various QBP2 problems. The indexes are selected such that $S_{\eta_i}$ is generated before $S_{\eta_{i+1}}$, $1\leq i \leq t-1$. Then by choosing $F^0=F^2$, these solutions satisfy properties (P3) and (P4). Thus, Theorem~\ref{th3} may be used to detect optimality early in some cases and Theorem \ref{th4} may be used to decrease the upper threshold rapidly. If the algorithm is not terminated using an optimality condition, it compares the solutions generated by the QBP2 solver and outputs the overall best solution with respect to the QBOP objective function. The resulting algorithm is called the {\it type 2 iterative bottleneck algorithm} (IB2-algorithm). A formal description of the IB2 algorithm is omitted as it can be obtained by appropriate modifications of the IB1 algorithm.

The IB2 algorithm solves at most $m$ problems of the type QBP2\@. Thus, if QBP2 can also be solved in $O(\phi_3(m))$ time then QBOP can be solved in $O(m\phi_3(m))$ time. By solving QBP2 using a heuristic, we get a heuristic version of the IB2 algorithm.

\subsection{The double bottleneck algorithm}

Note that Algorithm IB1 sequentially increases the lower threshold value while the algorithm IB2 sequentially decreases the upper threshold value. The two algorithms can be combined to generate another algorithm that alternately increases the lower threshold and decreases the upper threshold. The operations of increasing the lower threshold or decreasing the upper threshold are carried out by solving a QBP1 and QBP2, respectively. The resulting algorithm is called the {\it double bottleneck algorithm} (DB-Algorithm). The validity of the DB-algorithm follows from Theorems \ref{the1} and \ref{the2} and the validity of algorithms IB1 and IB2.  A formal description of the DB Algorithm is given in Algorithm \ref{aa2}.

\vspace{4 mm}
\begin{algorithm}[htb]
\caption{The DB Algorithm}
\label{aa2}

        Let $w_1 <w_2 <\cdots <w_p$ be an ascending arrangement of all distinct values of $\{c_{ij} : (i,j) \in E\times E \}$\;
        $C^ \prime \leftarrow C$; $obj\leftarrow \infty $ ; $sol \leftarrow \emptyset$, $M\leftarrow 1+w_p$; $\bar{z}=w_p$; $\tilde{z}=w_1$\;
        \While{$\bar{z} \neq M$ and $\tilde{z}\neq -M$ }
        {
        Solve QBP1($C^{\prime}$)\;
        \lIf{If QBP1($C^{\prime}$) is infeasible}{\Return{$\emptyset$ and $\infty$}}
        \lElse{let $S$ be the solution of QBP1($C^{\prime}$)}
        $\bar{z} \leftarrow \max\{c^{\prime}_{ij} : (i,j) \in S \times S\}$\;
        \If{$\bar{z} < M$}
        {
        \lIf{$Z(C, S) < \mathit{obj}$}{$\mathit{sol} \leftarrow S$;
        $\mathit{obj} \leftarrow Z(C, S)$}
        \lIf{obj $ = 0$}{\Return $\mathit{sol}$ and $\mathit{obj}$}
        $c^\prime _{ij}  \leftarrow  \left\{ \begin{array}{ll}
   c_{ij} \quad & \text{if $c_{ij} > Z_\text{min}(C, S)$,} \\
   M \quad & \text{otherwise} \end{array} \right.$ for each $i, j \in E$\;
        }
        \BlankLine
        Solve QBP2($C^{\prime}$); let $S$ be the resulting solution\;
        $\tilde{z}\leftarrow \min\{c^{\prime}_{ij} : (i,j) \in S\times S\}$\;
        \If{ $\tilde{z} > -M$}
        {
        \lIf{$Z(C, S) < \mathit{obj}$}{$\mathit{sol} \leftarrow S$;
        $\mathit{obj} \leftarrow Z(C, S)$}
        \lIf{$\mathit{obj} = 0$ }{\Return $\mathit{sol}$ and $\mathit{obj}$}
        $c^{\prime} _{ij}  \leftarrow  \left\{ \begin{array}{ll}
   c _{ij} \quad & \text{if $c_{ij} < Z_\text{max}(C, S)$} \\
   -M \quad & \text{otherwise} \end{array} \right.$ for each $i, j \in E$\;
        }
        }
        \Return{opt and sol}
        \end{algorithm}

\section{Polynomially solvable cases}

Let us now consider a special cases of QBOP that can be solved in polynomial time where the cost matrix is {\it decomposable}. i.e.  $c_{ij}=a_i+b_j$ for given $a_i \geq 0$, $b_j\geq 0$ and $i,j\in E$. We denote such an instance of a QBOP by QBOP($a+b$). Let $Z^+(C,S)$ denote the objective function of QBOP($a+b$). Then,
\begin{align}
\nonumber Z^+(C,S)&=\max\{c_{ij} : (i,j)\in S\times S\} - \min\{c_{ij} : (i,j)\in S\times S\}\\
\nonumber &=\max\{a_i+b_j : (i,j)\in S\times S\} - \min\{a_i+b_j : (i,j)\in S\times S\}\\
 &=\max\{a_i : i\in S\}+\max\{b_i : i\in S\}-\min\{a_i : i\in S\} - \min\{b_i : i\in S\}\\
\label{pol1}&=\max\{a_i : i\in S\}+ \max\{-a_i : i\in S\} +\max\{b_i : i\in S\}- \min\{b_i : i\in S\}
\end{align}

Let $w_i$ be some prescribed weight of $i\in E$ and $g: \ff \rightarrow \mathbb{R}$. Duin and Volgenant~\cite{Duin1991} showed

\noindent that combinatorial optimization problems of the type

\begin{mathprog}[COP($g$):]
\progline[Minimize]{\max\{w_i: i\in S\} + g(S)}
\progline[subject to]{S\in \ff.}
\end{mathprog}
can be solved in $O(m\zeta(m))$, where $\zeta(m)$ is the complexity of minimizing $g(S)$ over $\ff$\@. Note that
\[
Z^+(C,S) = \max\{a_i : i\in S\} + g(S),
\]
where $g(S) = \max\{-a_i : i\in S\} + g_1(S)$ and $g_1(S)= \max\{b_i : i\in S\} - \min\{b_i : i\in S\}$. But minimizing $g_1(S)$ over $\ff$ is precisely the LBOP~\cite{Martello1984}. Thus, recursively applying the results of Duin and Volgenant~\cite{Duin1991}, QBOP($a+b$)  can be solved in $O(m^2\eta(m))$ time, where $\eta(m)$ is the complexity of an LBOP with the same family of feasible solutions as that of the QBOP($a+b$).

Another interesting polynomially solvable case is obtained when $c_{ij} = a_ib_j$ for $(i,j)\in E\times E$ where $a_i \geq 0, b_i \geq 0$ for $i=1,2,\ldots ,n$. The corresponding instance of QBOP is denoted by QBOP($ab$). Now, for any feasible solution $S\in \ff$,
\begin{align}
\nonumber Z^*(C,S)&=\max\{c_{ij} : (i,j)\in S\times S\} - \min\{c_{ij} : (i,j)\in S\times S\}\\
\nonumber &=\max\{a_ib_j : (i,j)\in S\times S\} - \min\{a_ib_j : (i,j)\in S\times S\}\\
 &=\max\{a_i : i\in S\}\max\{b_i : i\in S\}-\min\{a_i : i\in S\}\min\{b_i : i\in S\}.
\end{align}

Let $\alpha$ and $\beta$ be two real numbers such that $\alpha \leq \beta$ and $Q_{\alpha,\beta}=\{i : \alpha \leq a_i \leq \beta\}$. Also let $\gamma = \min\{b_i : i\in Q_{\alpha,\beta}\}$, $\delta = \max\{b_i : i\in Q_{\alpha,\beta}\}$ and $A=\{a_1,a_2,\ldots ,a_n\}$. Consider the constrained g-deviation problem:
\begin{mathprog}[GDP($\alpha,\beta$):]
\progline[Minimize]{\max\{\beta b_i : i\in S\}+ \max\{-\alpha b_i: i\in S\}}
\progline[subject to]{S\in \ff,}
\progline{\max\{b_i : i\in S\} \leq \delta,}
\progline{\min\{b_i: i\in S\} \geq \gamma,}
\progline{\max\{a_i : i\in S\} = \beta,}
\progline{\min\{a_i: i\in S\} = \alpha.}
\end{mathprog}
Let $S^{\alpha\beta}$ be an optimal solution to GDP($\alpha\beta$) with optimal objective function value $Z^*(C,S^{\alpha\beta})$. Choose $S^0$ such that
$$Z^*(C,S^0)=\min\{Z^*(C,S^{\alpha\beta}): (\alpha,\beta)\in A\times A, \alpha \leq \beta\}$$
\begin{theorem}\label{gdp} $S^0$ is an optimal solution to QBOP($ab$).\end{theorem}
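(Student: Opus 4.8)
The plan is to establish a two-sided inequality between $Z^{*}(C,S^{0})$ and the optimal value of QBOP($ab$), exploiting the fact that the pair $(\min\{a_i:i\in S\},\max\{a_i:i\in S\})$ associated with any feasible $S$ ranges over $A\times A$, which is exactly the index set over which $S^{0}$ is minimized.

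First I would verify the key identity underlying the definition of GDP: for any $S\in\ff$ satisfying the two equality constraints $\max\{a_i:i\in S\}=\beta$ and $\min\{a_i:i\in S\}=\alpha$, the GDP objective coincides with $Z^{*}(C,S)$. Because $a_i,b_i\ge 0$ and the values $\alpha,\beta\in A$ are therefore nonnegative, the scalars factor through the extrema, giving $\max\{\beta b_i:i\in S\}=\beta\max\{b_i:i\in S\}$ and $\max\{-\alpha b_i:i\in S\}=-\alpha\min\{b_i:i\in S\}$. Summing these and substituting the equality constraints, the GDP objective becomes $\beta\max\{b_i:i\in S\}-\alpha\min\{b_i:i\in S\}=Z^{*}(C,S)$, matching the closed form for $Z^{*}$. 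This is the one genuinely computational step and it is what justifies recording the optimal GDP value as $Z^{*}(C,S^{\alpha\beta})$.

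Next, the inequality $Z^{*}(C,S^{0})\le Z^{*}(C,S^{*})$. Let $S^{*}$ be any optimal solution to QBOP($ab$) and put $\beta^{*}=\max\{a_i:i\in S^{*}\}$ and $\alpha^{*}=\min\{a_i:i\in S^{*}\}$; both lie in $A$ with $\alpha^{*}\le\beta^{*}$, so $(\alpha^{*},\beta^{*})$ is among the pairs over which $S^{0}$ is minimized. I would then check that $S^{*}$ is feasible for GDP($\alpha^{*}\beta^{*}$): the two equality constraints on the $a$-values hold by definition, and since every $i\in S^{*}$ satisfies $\alpha^{*}\le a_i\le\beta^{*}$ we have $S^{*}\subseteq Q_{\alpha^{*},\beta^{*}}$, which forces $\min\{b_i:i\in S^{*}\}\ge\gamma$ and $\max\{b_i:i\in S^{*}\}\le\delta$ by the definitions of $\gamma,\delta$ as the extreme $b$-values over $Q_{\alpha^{*},\beta^{*}}$. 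By the identity of the previous paragraph and optimality of $S^{\alpha^{*}\beta^{*}}$ for GDP($\alpha^{*}\beta^{*}$), we get $Z^{*}(C,S^{\alpha^{*}\beta^{*}})\le Z^{*}(C,S^{*})$, and hence $Z^{*}(C,S^{0})\le Z^{*}(C,S^{\alpha^{*}\beta^{*}})\le Z^{*}(C,S^{*})$.

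Finally, the reverse inequality is immediate: $S^{0}$ equals $S^{\alpha\beta}$ for some admissible pair $(\alpha,\beta)$, so $S^{0}\in\ff$ is a feasible solution of QBOP($ab$), whence $Z^{*}(C,S^{0})\ge Z^{*}(C,S^{*})$ by optimality of $S^{*}$. Combining the two bounds gives $Z^{*}(C,S^{0})=Z^{*}(C,S^{*})$, so $S^{0}$ is optimal. I expect the only real obstacle to be the bookkeeping in the identity step: one must use the nonnegativity of $a$ and $b$ (and therefore of $\alpha,\beta$) to pull the scalars through the max and min, and confirm that the $b$-range constraints of GDP are precisely the consequences of $S\subseteq Q_{\alpha,\beta}$, so that they neither exclude the witness $S^{*}$ nor interfere with the identity that makes the GDP objective equal to $Z^{*}$.
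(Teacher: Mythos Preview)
Your proposal is correct and follows essentially the same approach as the paper: establish that the GDP objective coincides with $Z^{*}(C,S)$ on the feasible set $F^{\alpha\beta}$ of GDP($\alpha,\beta$), and then use the fact that $\ff=\bigcup\{F^{\alpha\beta}:(\alpha,\beta)\in A\times A,\ \alpha\le\beta\}$ to conclude. You spell out more carefully than the paper does why an optimal $S^{*}$ lands in the appropriate $F^{\alpha^{*}\beta^{*}}$ (in particular, checking the $b$-range constraints via $S^{*}\subseteq Q_{\alpha^{*},\beta^{*}}$) and why the scalars $\alpha,\beta\ge 0$ can be pulled through the extrema, but the structure of the argument is the same.
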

\begin{proof}Let $F^{\alpha\beta}$ be the family of feasible solutions of GDP($\alpha\beta$). It is possible that $F^{\alpha\beta}=\emptyset$ and in this case we choose $S^{\alpha\beta}=\emptyset$ with objective function value a very large number.   Note that $Z^*(C,S)=\max\{\beta b_i : i\in S\}+ \max\{-\alpha b_i: i\in S\}$ for all $S\in F^{\alpha\beta}$. Thus, $Z^*(C,S^{\alpha\beta}) \leq Z^*(C,S)$ for all $S\in F^{\alpha\beta}$. Since $\ff = \cup\{F^{\alpha\beta} : (\alpha,\beta)\in A\times A, \alpha\leq \beta\}$ the result follows.\end{proof}
Thus by Theorem~\ref{gdp}, QBOP($ab$) can be solved by solving $O(n^2)$ problems of the type GDP($\alpha,\beta$) considering all $(\alpha,\beta)\in A\times A$ such that $\alpha\leq \beta$. But GDP($\alpha,\beta$) is a $g$-deviation problem~\cite{Duin1991} which can be solved as a sequence of bottleneck problems of the type

\begin{mathprog}[BP($\alpha,\beta$):]
\progline[Minimize]{ \max\{-\alpha b_i: i\in S\}}
\progline[subject to]{S\in \ff,}
\progline{\max\{b_i : i\in S\} \leq \delta,}
\progline{\min\{b_i: i\in S\} \geq \gamma,}
\progline{\max\{a_i : i\in S\} = \beta,}
\progline{\min\{a_i: i\in S\} = \alpha.}
\end{mathprog}
Now the bottleneck problem BP($\alpha,\beta$) can be solved by solving $O(\log n)$ feasibility problems using the binary search version of the well known threshold algorithm for bottleneck problems~\cite{ed}. But we need to take care the constraints in BP($\alpha,\beta$) associated with the feasibility routine embedded within the threshold algorithm and this can be achieved by solving a minsum problem of the type

\begin{mathprog}[MSP:]
\progline[Minimize]{ \sum_{i\in S}w_i}
\progline[subject to]{S\in \ff .}
\end{mathprog}
The value of $w_i$ in MSP depends on $\alpha, \beta$ and the threshold value used for the feasibility test. We omit the details of the selection of $w_i$ values which can easily be constructed by an interested reader.  Thus if MSP can be solved in $O(\eta(m))$ time, then QBOP($ab$) can be  solved in $O(n^2\log n\eta(m))$ time.

\section{The quadratic balanced knapsack problem}
Let us now consider a specific case of QBOP called the {\it quadratic balanced knapsack problem} (QBalKP) which can be defined as follows:
\begin{align}
\nonumber\mbox{Minimize } &\max\{c_{ij} : (i,j)\in E\times E, x_i=x_j=1\}- \min\{c_{ij} : (i,j)\in E\times E, x_i=x_j=1\}\\
\label{k1}\mbox{subject to }& \sum_{j=1}^ma_jx_j \geq b\\
\label{k2}&x_j \in \{0,1\} \mbox{ for } j \in E.
\end{align}
By choosing $\ff \subseteq 2^E$ such that $S\in \ff$ implies $\sum_{i\in S}a_i \geq b$ we can see that QBalKP is an instance QBOP\@. The compact representation of $\ff$ is given by the constraints (\ref{k1}) and (\ref{k2}). QBalKP can be used to model the travel agency example and portfolio selection examples discussed in section 1.   Since the quadratic bottleneck knapsack problem (QBotKP)~\cite{zp} is a special case of QBalKP and QBotKP is strongly NP-hard, QBalKP is also strongly NP-hard. (Note that the LBOP version of QBalKP is solvable in polynomial time.) QBalKP can be formulated as a mixed integer program:
\begin{mathprog}{}
\progline[Minimize]{u - v}
\progline[Subject to]{\sum_{j=1}^ma_jx_j \geq b,}
\progline{u \ge c_{ij}y_{ij} \mbox{ for } (i,j)\in E\times E, c_{ij}\neq 0,}
\progline{v \le c_{ij}y_{ij} + M(1 - y_{ij}) \mbox{ for } (i,j)\in E\times E, c_{ij}\neq M,}
\progline{y_{ij}-x_i \leq 0 \mbox{ for } (i,j)\in E\times E,}
\progline{y_{ij}-x_j \leq 0\mbox{ for } (i,j)\in E\times E,}
\progline{x_i+x_j-y_{ij}\leq 1 \mbox{ for } (i,j)\in E\times E,}
\progline{x_j \in \{0,1\}, \mbox{ for } j \in E,}
\progline{0 \le u, v \le M, }
\end{mathprog}
where $M=\max\{c_{ij}: 1\leq i,j\leq n\}$. Solving the mixed integer programming formulation given above becomes difficult as the problem size increases. However, we can use the general purpose algorithms developed in the previous section to solve QBalKP\@. To use the algorithms IB1, IB2, and DB, we can make use of the algorithm of Zhang and Punnen~\cite{zp} as the QBP1 (QBP2) solver. To apply the BDT and TDT algorithms, we need an algorithm to solve the corresponding quadratic feasibility problem. 

Recall that the {\it quadratic feasibility problem} is: ``Given two real numbers $\alpha$ and $\beta$, where $\alpha\leq \beta$, test if $F(C,\alpha,\beta) \neq \emptyset$ and produce an $S\in F(C,\alpha,\beta)$ whenever $F(C,\alpha,\beta) \neq \emptyset$.''

In section 2, we indicated that a quadratic feasibility problem can be solved as a combinatorial optimization problem with conflict pair constraints~\cite{dar, Zhang2009}. We now observe that the quadratic feasibility problem for QBKP can be solved by solving the maximum weight independent set problem (MWIP) on a graph with node set $E$ and edge set $E(\alpha,\beta)$. An integer programming representation of this MWIP is given below.

\begin{mathprog}[MWIP:]
\progline[Maximize]{\sum_{i=1}^ma_jx_j}
\progline[subject to]{x_i+x_j \leq 1 \mbox{ for } (i,j) \in E(\alpha, \beta),}
\progline{x_j \in \{0,1\}, \mbox{ for } j \in E.}
\end{mathprog}

Let $x^*=(x_1^*,x_2^*,\ldots ,x_m^*)$ be an optimal solution to the MWIP and $z^*$ be its optimal objective function value. Then $F(C,\alpha,\beta) \neq \emptyset$ if and only if $z^* \geq b$. Thus, we can use an MWIP solver to implement the algorithms discussed in the previous section  for the special case of QBalKP\@. The solution of the quadratic feasibility problem discussed above is closely related to the quadratic feasibility problem studied by Zhang and Punnen~\cite{zp} for the quadratic bottleneck knapsack problem with appropriate differences to handle the QBalKP objective.

\section{Computational Experiments}

In this section we report results of extensive experimental analysis conducted on randomly generated QBalKP instances.  The objective of the experiments is to assess the relative performance of various algorithms developed in section 3. We have implemented exact and heuristic versions of these algorithms and compared the outcomes in terms of  solution quality and computational time. Our experiments also examined the effectiveness of the conditions provided by Theorems \ref{tth1}, \ref{tth2}, \ref{th3}, and \ref{th4} for early detection of optimality and rapid advancement through the search intervals. All the experiments were conducted on an Intel~i7-2600 CPU based PC\@.
The algorithms are implemented in C\#, and CPLEX~12.4 was used to solve the mixed integer programming problems within our implementations.
x86-64 instruction set was used and concurrency was not allowed in our the algorithms as well as in CPLEX.

All the algorithms discussed in this paper (except the MIP formulation of QBalKP) require a feasibility test procedure and the dominating complexity of these algorithms in each iteration is that of this procedure. Recall that a feasibility test answers the question if there exists a feasible solution to the QBalKP and, whenever the answer is `yes', it generates such a solution. In section 5 we observed that this can be achieved by solving a maximum weight independent set problem. We can also use other variations of this approach to test feasibility and the empirical behavior of different variations could be different. Since the feasibility test is carried out several times in the algorithm, the effect of different variations of the feasibility tests could affect the the running time as well as solution quality (for heuristic algorithms). For definiteness and simplicity, we have restricted our
 implementation to three different feasibility test procedures which are summarized below:
\begin{enumerate}
	\item[FT1:] Solve the maximum weight independent set problem as described in section~5 and then compare the objective value $\sum_{i = 1}^m a_i x_i$ with $b$.
	If $\sum_{i = 1}^m a_i x_i \ge b$, the resulting solution $S(x)$ is a feasible solution to the QBalKP\@.
	Otherwise the answer is `no'. We used CPLEX~12.4 to solve the resulting integer program.
	\item[FT2:] Consider the maximum weight independent set problem as described in section~5.
	Choose the objective function coefficients to be zero\footnote{One can use any objective function in this feasibility test.  If the objective function is not a constant, it is useful to force the solver to stop after it finds the first feasible solution.  However, our experiments showed that using the original objective function in this feasibility test slows down the algorithms.} and add the new constraint
\[
\sum_{i = 1}^m a_i x_i \ge b
\]
to the formulation. We used CPLEX~12.4 to solve the resulting constrained maximum weight independent set problem.
The CPLEX solver stops as soon as a feasible solution is found.
When using this feasibility test procedure, we set the `MIP Emphasis' parameter of CPLEX to `Emphasize feasibility over optimality' which in our experiments provided the best performance.
	\item[FT3:] Solve the integer program defined in FT2 by providing a time limit for the mixed integer programming solver.  Note that if the solver fails to find a feasible solution in the allowed time limit, there is no guarantee that a feasible solution does not exist and, thus, using such a procedure in any of the algorithms turns an exact algorithm into a heuristic. Because of this heuristic decision, the properties (P1) and (P2) may not hold precisely. Nevertheless, we make a heuristic assumption that these properties hold and proceed accordingly.
\end{enumerate}
As indicated earlier, even for these special cases of feasibility tests, the solutions returned by these feasibility tests could be different and, thus, not only the execution time of each feasibility test but also the optimization process itself for a QBalKP algorithm may vary even for the exact feasibility tests FT1 and FT2.

We use the following notations to represent our algorithms under different parameter settings.
MIP stands for the mixed integer programming formulation of the QBalKP solved with CPLEX (the parameters of CPLEX are default)\@.
BDT, IB and DB denote the BDT, IB1 and DB algorithms, respectively, where the effect of Theorem~\ref{tth1} is suppressed.
By default, we use feasibility test FT2.
If `$^\text{B}$' is added to the name of an algorithm (such as BDT$^\text{B}$ or IB$^\text{B}$), feasibility test FT1 is used.
BDT$^\Omega$ and IB$^\Omega$ stand for the variations of BDT and IB, where  early optimality detection is guaranteed by Theorem~\ref{tth1} is enabled.
BDT$^t$ denotes the heuristic version of the BDT algorithm, where $t$ is the time limit prescribed for each feasibility test (FT3).  IB$^t$ and DB$^t$ denote the heuristic version of the corresponding algorithms, where $t$ is the time limit prescribed for each feasibility test (FT3) within the QBP1 solving procedure.

\subsection{Test problems}

Since this is the first time when the QBalKP is considered in the literature, we have developed a class of test instances for the problem.
These test instances are random problems constructed as follows.
For each test instance, we are given a triple $(m, \sigma, s)$, where $m > 1$ is an integer, $\sigma > 0$ and $0 \le s \le 1$.
We first generate an $m \times m$ matrix $C^{\prime} = (c^{\prime}_{ij})$, where $c^{\prime}_{ij}$ is a normally distributed random integer with mean $\mu = 0$ and standard deviation $\sigma$ as given.
Then the matrix $C=(c_{ij})$ is generated, where $c_{ij} = c^{\prime}_{ij} - \min_{rs} c^{\prime}_{rs}$. This guarantees that $c_{ij} \geq 0$.
Then, an $m$-vector $(a_i)$, where $a_i$ a uniformly distributed random integer in the range $0 \le a_i \le 1000$, is generated.
Finally, $b$ is selected as a uniformly distributed random integer in the range $\lfloor 250 m s \rfloor \le b \le \lfloor 750 m s \rfloor$.
Observe that $E[\sum_{i \in S} a_i] = E[b]$ if $S \subseteq E$ such that $|S| = ms$, where $E[x]$ is the expected value of $x$.
In other words, by varying the value of $s$, one can control the number of non-zeros in an optimal solution to the QBalKP instance.

It may be noted that the instances we generated are symmetric in the following sense; replacing $c_{ij}$ with $(\max_{i'j'} c_{i'j'}) - c_{ij}$ would not, on average, change the properties of the matrix $C$.
Hence, the algorithms IB1 and IB2 are expected to show similar average performance.  Thus, hereafter, we do not discuss the algorithm IB2 and denote IB1 as IB. Likewise, BDT and TDT algorithms are expected to have similar average performance. Thus, we do not consider the TDT algorithm in our experimental analysis and focus on the BDT algorithm.

Figure~\ref{fig:depend-s} indicates relative performance of the BDT and IB algorithms as a function of the parameter $s$.  We set $m = 100$ and $\sigma = 100$ in this experiment.

\begin{figure}
\begin{center}
\includegraphics[page=1,trim = 23mm 113mm 20mm 92mm, clip=true, width=.9\textwidth]{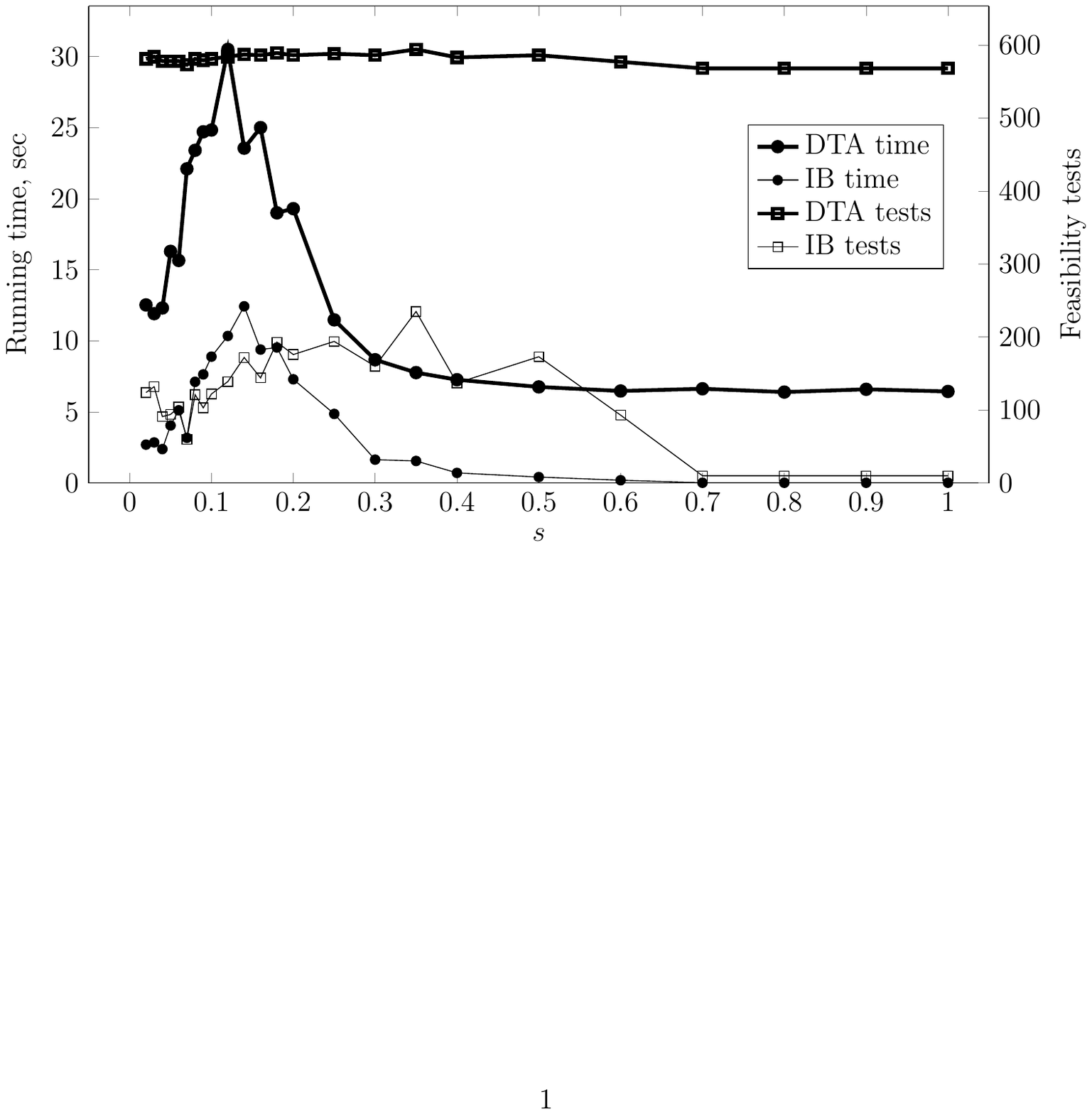}
\end{center}
\caption{Analysis of the random instances with different values of parameter $s$.  The lines with circle marks indicate the running time of the algorithms and the lines with square marks indicate the number of feasibility tests applied within the algorithms.}
\label{fig:depend-s}
\end{figure}

It appears that the instance with either large or small values of $s$ are relatively easy to solve.  For small values of $s$, each feasibility test takes only a small amount of  time since it is easy to find a feasible solution if $b$ is small.  For large values of $s$, each feasibility test also takes a relatively small time since many  such problems become  infeasible.  Another interesting observation is that the number of iterations of the BDT algorithm almost does not depend on $s$ (indeed, even if the problem is infeasible, the BDT algorithm will make $p$ iterations) while it varies significantly for the IB algorithm making it significantly faster for certain class of instances.

Figure~\ref{fig:depend-sigma} indicates the performance of the BDT and IB algorithms as a function of the parameter $\sigma$ of the instance.
The value of $s$ is set in this experiment to $s = 0.1$ and $m = 100$.

\begin{figure}
\begin{center}
\includegraphics[page=2,trim = 23mm 113mm 20mm 92mm, clip=true, width=.9\textwidth]{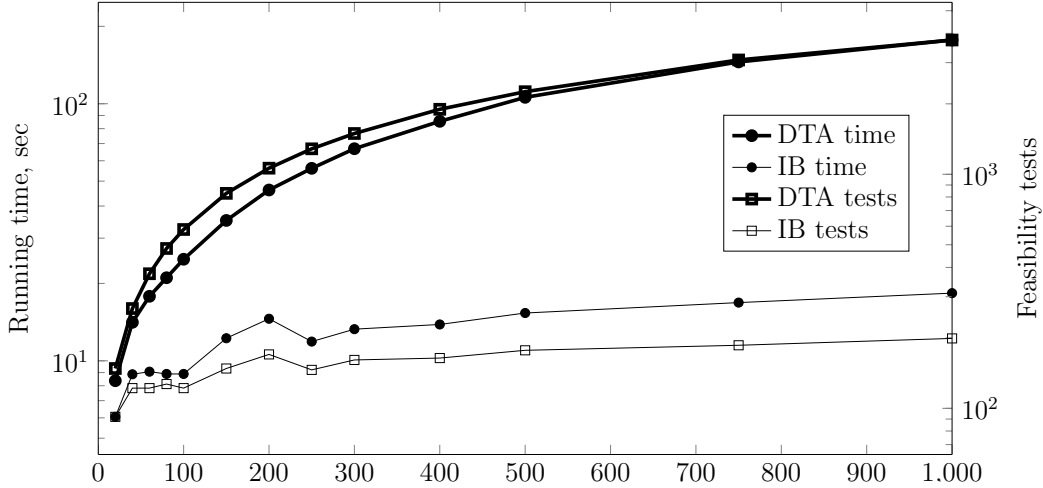}
\end{center}
\caption{Analysis of the random instances with different values of parameter $\sigma$.  The lines with circle marks indicate the running time of the algorithms and the lines with square marks indicate the number of feasibility tests applied within the algorithms.}
\label{fig:depend-sigma}
\end{figure}

One can see that the random instances become harder with  increase in the value of $\sigma$.
Indeed, a larger $\sigma$ leads to a larger number $p$ of distinct weights $c_{ij}$ which, in turn, increases the number of iterations of the algorithm.
However, $p$ is limited by $m^2$ and, thus, it grows slower than $\sigma$.
In fact, the number of iterations of the BDT algorithm is approximately proportional to $p$.
In contrast, the IB algorithm efficiently handles instances with large $p$.
Recall that it needs only $O(\log p)$ feasibility tests to solve each QBotKP subproblem~\cite{zp}.
Hence, for instances with large $p$, the IB algorithm appears preferable.
Also it follows from our experiments that the time needed for the feasibility test is almost independent on the value of $\sigma$.

Based on these preliminary observations, we set $s = 0.1$ and $\sigma = 100$ for the rest of the experiments to assure that the test problems generated are reasonably hard.

\subsection{Comparison of BDT, IB, DB and MIP}

Let us first evaluate performance of the basic algorithms proposed in this paper.
In Table~\ref{tab:basic}, we report the results of our experiments with the BDT$^\text{B}$, BDT, IB$^\text{B}$, IB, DB$^\text{B}$, DB and MIP algorithms.
The notations used in various columns of the table are explained below:
\begin{itemize}
	\item $m$ is the size of the test instance;
	\item `obj' is the objective value of the optimal solution to the problem;
	\item $p$ is the number of distinct $c_{ij}$ values in matrix $C$;
	\item $\Delta = \max_{i,j} c_{ij} - \min_{i,j} c_{ij}$;
	\item `Running time, sec' columns report the running time of each of the algorithms;
	\item `Feasibility tests' columns report the number of times feasibility tests are carried out within each of the algorithms. Note that in IB, IB$^\text{B}$, DB, and DB$^\text{B}$ algorithms, we are not explicitly solving feasibility problems. However, feasibility problems of similar nature are solved within the QBotKP solver that is used within these algorithms. Thus, the number of feasibility tests include the feasibility tests carried out within the QBotKP solver used, which is a variation of the algorithm by Zhang and Punnen~\cite{zp}.
\end{itemize}

The last row of the table reports the average values for corresponding columns.
However, it may be noted that the average running time can not be used to judge the algorithm's performance in general because the running times vary significantly from instance to instance.

The best result (running time and number of tests) for each instance is underlined.


Feasibility test FT2 provides a better performance than feasibility test FT1 in each case.
Indeed, according to Table~\ref{tab:basic}, the optimality of a solution to the maximum weight independent set problem does not reduce the number of feasibility tests, while reaching the optimal solution clearly takes more time than finding a feasible solution.

The IB algorithm clearly outperforms all other algorithms for each test instance with regards to both the number of feasibility tests and the running time.  The MIP algorithm turns out to be very slow for any practical instances.

\begin{sidewaystable}[ht] \centering
\setlength{\tabcolsep}{0.4em}
\footnotesize
\begin{tabular}{@{} l r r r @{} c @{} r r r r r r r @{} c @{} r r r r r r @{}}
\toprule
&&&&&\multicolumn{7}{c}{Running time, sec}&&\multicolumn{6}{c}{Feasibility tests}\\
\cmidrule(){6-12}
\cmidrule(){14-19}
$m$&obj&$p$&$\Delta$&\hspace*{1.5em}&BDT$^\text{B}$&BDT&IB$^\text{B}$&IB&DB$^\text{B}$&DB&MIP&\hspace*{1.5em}&BDT$^\text{B}$&BDT&IB$^\text{B}$&IB&DB$^\text{B}$&DB\\
\midrule
50&64&453&689&&2.7&2.2&0.6&\underline{0.5}&0.6&0.5&1.7&&463&462&94&\underline{87}&97&95\\
100&239&568&728&&131.5&22.0&56.2&\underline{8.0}&99.3&12.2&7018.1&&582&581&128&\underline{125}&235&209\\
150&204&622&772&&476.0&115.8&240.1&\underline{37.5}&282.7&43.1&13187.2&&642&643&191&\underline{188}&227&228\\
200&181&672&778&&938.1&217.7&475.3&\underline{38.8}&756.0&75.1&---&&684&684&117&\underline{101}&207&174\\
250&206&701&807&&3543.5&980.2&3806.5&\underline{473.5}&8449.6&916.4&---&&719&720&182&\underline{178}&347&348\\
\midrule
Avg.&179&603&755&&1018.4&267.6&915.7&\underline{111.7}&1917.6&209.5&---&&618&618&142&\underline{136}&223&211\\
\bottomrule
\end{tabular}
\caption{Comparison of basic algorithms.}
\label{tab:basic}
\vspace{2\baselineskip}
\begin{tabular}{@{} l r r @{} c @{} r r r r @{} c @{} c c @{} c @{} r r r r @{} c @{} r r r r @{}}
\toprule
&&&&\multicolumn{4}{c}{Running time, sec}&&\multicolumn{2}{c}{Early}&&\multicolumn{4}{c}{Iterations}&&\multicolumn{4}{c}{Feasibility tests}\\
\cmidrule(){5-8}
\cmidrule(){10-11}
\cmidrule(){13-16}
\cmidrule(){18-21}
$m$&obj&$\Omega$&\hspace*{1.5em}&BDT&BDT$^\Omega$&IB&IB$^\Omega$&\hspace*{1.5em}&BDT$^\Omega$&IB$^\Omega$&\hspace*{1.5em}&BDT&BDT$^\Omega$&IB&IB$^\Omega$&\hspace*{1.5em}&BDT&BDT$^\Omega$&IB&IB$^\Omega$\\
\midrule
50&64&466&&2.2&2.2&0.5&\underline{0.4}&&\checkmark&\checkmark&&462&432&10&\underline{9}&&462&440&87&\underline{86}\\
100&239&295&&22.0&19.2&\underline{8.0}&8.2&&\checkmark&\checkmark&&581&467&14&\underline{13}&&581&476&125&\underline{124}\\
150&204&346&&115.8&55.3&37.5&\underline{34.5}&&\checkmark&\checkmark&&643&477&21&\underline{20}&&643&485&188&\underline{186}\\
200&181&362&&217.7&120.1&38.8&\underline{37.1}&&\checkmark&\checkmark&&684&497&12&\underline{11}&&684&505&101&\underline{99}\\
250&206&348&&980.2&804.9&\underline{473.5}&503.0&&\checkmark&\checkmark&&720&554&20&\underline{19}&&720&563&178&\underline{177}\\
\midrule
Avg.&179&363&&267.6&200.3&\underline{111.7}&116.6&&&&&618&485&15&\underline{14}&&618&494&136&\underline{134}\\
\bottomrule
\end{tabular}
\caption{Analysis of efficiency of the early detection speed up.}
\label{tab:omega}
\end{sidewaystable}

\subsection{Early Detection of Optimality}

In this section we report the results of our experiments that assess the effectiveness of Theorem~\ref{tth1} for algorithms BDT and IB\@.
In Table~\ref{tab:omega}, we compare the results of the corresponding experiments.
The `$\Omega$' column reports the objective value of the optimal solution to the QBP2 version of the bottleneck knapsack problem (QBotKP2):
\begin{align}
\nonumber\mbox{Maximize } &\min\{c_{ij} : (i,j)\in E\times E, x_i=x_j=1\}- \min\{c_{ij} : (i,j)\in E\times E, x_i=x_j=1\}\\
\mbox{Subject to }& \sum_{j=1}^ma_jx_j \geq b\\
&x_j \in \{0,1\}, \mbox{ for } j=1,2,\ldots ,m.
\end{align}

This is used as the value of the parameter $\Omega$ in BDT$^\Omega$ and IB$^\Omega$ algorithms.
The `Early' columns report if early detection of optimality happened for the particular
 algorithm and test instance.
`Iterations' columns report the number of iterations of the algorithms.

In our experiments, early optimality detection happened for every test instance and every algorithm, where such an option was enabled.
For the BDT algorithm, early detection significantly reduced the number of iterations and improved the overall performance.
However, it reduces the number of iterations of the IB algorithm only by one in each run.
Since calculating of $\Omega$ is approximately equivalent to one iteration of the IB algorithm, the overall running time of the procedure almost did not change after enabling early detection.
\clearpage
Another interesting observation that we can make from Table~\ref{tab:omega} is that the number of iterations of the IB algorithm almost does not depend on the size of the problem.
Since each such iteration needs $O(\log p) = O(\log m)$ feasibility tests, the real running time of the IB algorithm per iteration is expected to be much higher than that of the BDT algorithm given large instances.

Recall that the value of $\Omega$ does not need to correspond to an optimal solution of the QBotKP2 but may be an upper bound to the optimal objective function value of QBotKP2\@.  Observe that the algorithm used in our experiments to solve the QBotKP2 maintains a search interval which depends on the values of two indices $\ell, u$ where $\ell \leq u$. When $u-\ell < 1$
the algorithm terminates and guarantees an exact optimal solution~\cite{zp}. This termination condition can be relaxed to $u-\ell < d$ to obtain a heuristic bound.
In our implementation, we used the termination condition with $u - l < d$, where $1 \le d \le p$.  Then the resulting upper bound $\Omega(d)$ can be calculated as $\Omega(d) = w_{u - 1}$.

\begin{figure}
\begin{center}
\includegraphics[page=3,trim = 23mm 118mm 20mm 100mm, clip=true, width=.9\textwidth]{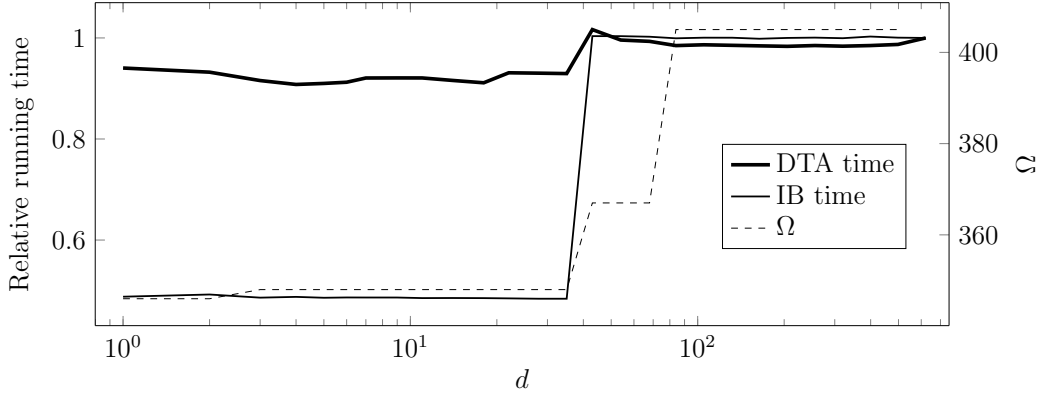}
\end{center}
\caption{Analysis of the early optimality detection for different values of $d$.  Observe that BDT$^{\Omega(1)} = \text{BDT}^{\Omega}$, and for $d = p$ we replace BDT$^{\Omega(p)}$ and IB$^{\Omega(p)}$ with the BDT and IB algorithms, respectively.  The relative running time is calculated as $t_d / t_1$, where $t_d$ is the running time of the BDT$^{\Omega(d)}$ or IB$^{\Omega(d)}$ algorithm and $t_1$ is the running time of the BDT or IB algorithm, respectively.  The experiments were conducted for a random instance of size $m = 150$.}
\label{fig:omega}
\end{figure}

The results of our experiments with the approximate value of $\Omega$ are reported in Figure~\ref{fig:omega}.  When the value of $d$ is relatively small, the upper bound $\Omega(d)$ is very close to the exact value of $\Omega$.  For $d > 40$, the upper bound becomes less accurate and the early detection happens in neither DTA$^{\Omega(d)}$ nor IB$^{\Omega(d)}$ algorithm.  Thus, the effect of using an upper bound $\Omega(d)$ instead of the optimal objective value $\Omega$ is relatively small.

\subsection{Comparison of Heuristics}

%

Recall that, by setting a time limitation to CPLEX when running a feasibility test, one can speed up the BDT, IB and DB algorithms at the cost of loosing optimality guarantee.
In Figure~\ref{fig:heuristics}, we show how the solution quality depends on the running time for each of the BDT$^t$, IB$^t$ and DB$^t$ algorithms.

\begin{figure}
\begin{center}
\includegraphics[page=4,trim = 23mm 114mm 20mm 100mm, clip=true, width=.9\textwidth]{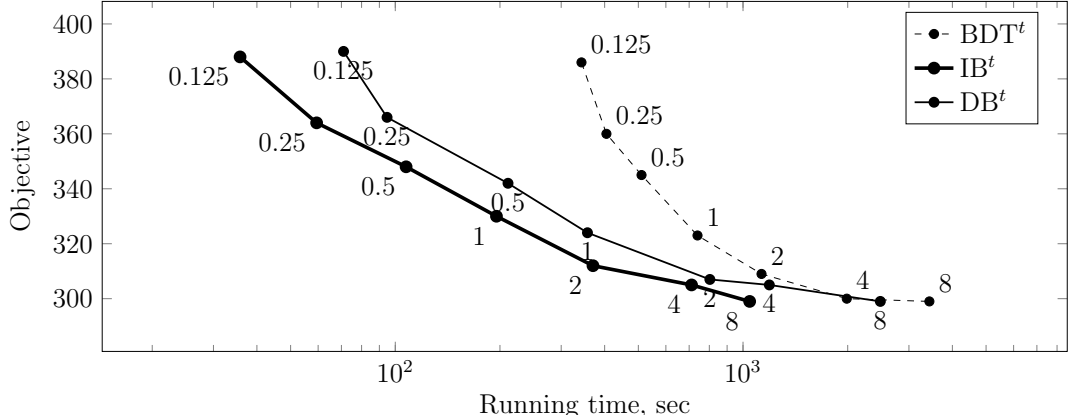}
\end{center}
\caption{Performance of the BDT$^t$, IB$^t$ and DB$^t$ heuristics.  The experiment is conducted for a random instance of size $m = 500$.  The values of $t$, in seconds, are reported near each node.}
\label{fig:heuristics}
\end{figure}

In our experiments, the running time and the solution quality of each of the heuristics monotonically depend on the parameter $t$.
Thus, the balance between the solution quality and the running time in the proposed heuristics can be efficiently controlled by $t$.
Note that the IB$^t$ algorithm, likewise its exact version, shows the best performance among the proposed heuristics.

We also compared the BDT$^t$, IB$^t$ and DB$^t$ heuristics with the MIP algorithm given a time limitation (in this case, we set the `MIP Emphasis' parameter of CPLEX to `Emphasize feasibility over optimality').
However, in our experiments, such a MIP heuristic showed very poor performance.

\section{Multinomial balanced optimization}

We now discuss a generalization of the quadratic balanced optimization. For any fixed integer $k$, a cost $c_{i_1i_2\ldots i_k}$ for $k$-tuple $(i_1,i_2,\ldots ,i_k)$ is prescribed. Consider the family $\ff$ of feasible solutions as in the case of QBOP. Then the {\it multinomial balanced optimization problem} (MBOP)is to find  an $S\in \ff$ such that
  $$\max\{c_{i_1i_2\ldots i_k}~:~ (i_1,i_2,\ldots ,i_k)\in S^k\}-\min\{c_{i_1i_2\ldots i_k}~:~ (i_1,i_2,\ldots ,i_k)\in S^k\}$$ is minimized,
where $S^k= \underbrace{S\times S\times \cdots \times S}_{k-\mbox{times}}$.

In QBOP, $c_{ij}$ is viewed as pairwise interaction cost while in MBOP we have `interaction cost' for $k$-tuples (i.e. cost for $k$ element ordered subsets of the ground set $E$.) The algorithms discussed in this paper extends in a natural way to the case of MBOP with an appropriate definition of the corresponding feasibility problem. We leave it for an interested reader to verify this claim. The number of distinct cost elements to be considered is $m^k$ as opposed to $m^2$ for the the case of QBOP. Thus, as $k$ increases the resulting algorithm could slow down significantly.

\section{Conclusion}
We introduced the combinatorial optimization model QBOP which can be used to model equitable distribution problems with pairwise interactions. The problem is strongly NP-hard even if the family of feasible solutions has a simple structure such as the collection of all subsets of a finite set with an upper bound on the cardinality of these subsets. Several exact and heuristic algorithms are provided along with detailed experimental analysis in the case of quadratic knapsack problems. Special cases of the problem with decomposable type cost matrices are discussed. It is shown that the complexity of the resulting QBOP depends on that of the corresponding LBOP.

It is not difficult to extend our results to the maximization version of the problem. By exploiting special structure of the family of feasible solutions $\ff$ and the structure of $C$, one may be able to obtain improved algorithms. These are interesting topics for further investigations, especially when real life situations warrant the study of such problems.\\

\noindent{\bf Acknowledgement:} We are thankful to the referees for their helpful comments which improved the presentation of the paper.


\begin{thebibliography}{33}
\bibitem{b18} R.K Ahuja, Minimum cost to reliability ratio problem, {\it Computers and Operations Research} 15 (1988) 83--89.
\bibitem{ah} R.K. Ahuja, The balanced linear programming problem, {\it European Journal of Operational Research} 101 (1997) 29--38.
\bibitem{b1}\v{S}. Bere\v{z}n\'{y}  and V. Lacko, Balanced problems on graphs with categorization of edges, {\it Discussiones Mathematicae Graph Theory} 23 (2003) 5--21.
\bibitem{b2}\v{S}. Bere\v{z}n\'{y}  and V. Lacko, Color-balanced spanning tree problems, {\it Kybernetika} 41 (2005) 539--546.
\bibitem{ip2}A. Brandst\"{a}d and  V. Giakoumakis, Maximum weight independent sets in hole- and co-chair-free graphs, {\it Information Processing Letters} 112 (2012) 67--71.
\bibitem{ip1} A. Brandst\"{a}d, V.V. Lozin, and R. Mosca,  Independent sets of maximum weight in apple-free graphs, {\it SIAM Journal on Discrete Mathematics} 24 (2010) 239--254.
\bibitem{bur}
R.~E. Burkard,  Quadratische Bottleneckprobleme,
 {\em Operations Research Verfahren} 18 (1974) 26--41.



\bibitem{Camerini1986}
P.~M.~Camerini, F.~Maffioli, S.~Martello and P.~Toth,
Most and least uniform spanning trees,
 {\em Discrete Applied Mathematics} 15 (1986) 181--197.

\bibitem{b19} P. Cappanera and M. G. Scutell\`{a}, Balanced paths in acyclic networks: Tractable cases and related approaches, {\it Networks} 45  (2005) 104--111.
\bibitem{b3} Y. Dai, H. Imai, K. Iwano, N. Katoh, K. Ohtsuka, and N. Toshimura, A new unifying heuristic algorithm for the undirected minimum cut problem using minimum range cut algorithms, {\it Discrete Applied Mathematics} 65 (1996) 167--190.

\bibitem{dar} A. Darmann, U. Pferschy, J. Schauer, G. J. Woeginger, Path, trees and matchings under disjunctive constraints, {\it Discrete Applied
Mathematics} 159 (2011) 1726--1735.

\bibitem{Duin1991}
C.~W. Duin and A.~Volgenant,  Minimum deviation and balanced optimization: a unified approach,
 {\em Operations Research Letters} 10 (1991) 43--48.
 \bibitem{ed}J.~Edmonds and D.~R. Fulkerson,  Bottleneck extrema, {\em Journal of Combinatorial Theory} 8 (1970) 299--306.
\bibitem{b0} D. Eppstein, Minimum range balanced cuts via dynamic subset sums,
{\it Journal of Algorithms}
23 (1997) 375-385.

\bibitem{Galil1988}
Z.~Galil and B.~Schieber,
 On finding most uniform spanning trees,
 {\em Discrete Applied Mathematics} 20 (1988) 173--175.


\bibitem{g1}A. Grin\`{e}ov\'{a}, D. Kravecov\'{a}, and M. Kul\'{a}\`{e}, Alternative approach to data network optimization, {\it Acta Electrotechnica et Informatica} 6 (2006) 1--5.
\bibitem{b23} S. Gupta and T. Sen, Minimizing the range of lateness on a single machine, {\it Journal of the Operationsl Research society} 35 (1984) 853-857.
\bibitem{h1}P. Hansen, G. Storchi and T. Vovor, Paths with minimum range and ratio of arc lengths, {\it Discrete Applied Mathematics} 78 (1997) 89--102.
\bibitem{kat} N. Katoh, An $\epsilon$-approximation scheme for combinatorial optimization problems with minimum variance criterion, {\it Discrete Applied Mathematics} 35 (1992) 131--141.
\bibitem{b4} N. Katoh and K. Iwano, Efficient algorithms for minimum range cut problems, {\it Networks} 24 (1994) 395--407.
\bibitem{lp} J. LaRusic and A. P. Punnen, The balanced travelling salesman problem, {\it Computers \& Operations Research} 38 (2011) 868--875.
\bibitem{liang} Z. Liang, S. Guo, Y. Li, A. Lim,  Balancing workload in project assignment, {\it Advances in Artificial Intelligence} Springer LNCS 5866, 2009, 91--100.
\bibitem{b24} C.-J. Liao and R.-H. Huang, An algorithm for minimizing the range of lateness on a single machine, {\it Journal of the Operational Research Society} 42 (1991) 183--186.
\bibitem{Martello1984}
S.~Martello, W.~Pulleyblank, P.~Toth and D.~de Werra,
Balanced optimization problems,
 {\em Operations Research Letters}, 3 (1984) 275--278.
\bibitem{b13} E.Q.V. Martins, An algorithm to determine a path with minimal cost/capacity ratio, {\it Discrete Applied
Mathematics} 8 (1984) 189--194.
\bibitem{b6} T. Nemoto, An efficient algorithm for the minimum range ideal problem, {\it Journal of the Operations Research Society of Japan}
42 (1999) 88--97.
\bibitem{b17} A.P. Punnen, On combined minmax-minsum optimization, {\it Computers and Operations Research} 21 (1994) 707--716.
\bibitem{b8} A. P. Punnen and Y. P. Aneja, Lexicographic balanced optimization problems,
{\it Operations Research Letters} 32 (2004) 27--30.
\bibitem{pa} A. P. Punnen and Y. P. Aneja, Minimum dispersion problems, {\it Discrete Applied Mathematics} 75 (1997) 93--102.
\bibitem{b9} A. P. Punnen and K. P. K. Nair, Constrained balanced optimization problems, {\it Computers \& Mathematics with Applications} 37 (1999) 157--163.
 \bibitem{pz} A.P. Punnen and R. Zhang, Quadratic bottleneck problems,  {\it Naval Research Logistics,} 58 (2011) 153--164.
\bibitem{b20} M.G. Scutell\`{a}, A strongly polynomial algorithm for uniform balanced network flow problem, {\it Discrete Applied Mathematics} 81 (1998) 123--131.
\bibitem{y1}M. Tegze and M. Vlach, Improved bounds for the range of lateness on a single machine, {\it Journal of the Operational Research society} 39 (1988) 675-680.
\bibitem{y2}M. Tegze and M. Vlach, Minimizing maximum absolute lateness and range of lateness under generalized due dates, {\it Annals of Operations Research} 86 (1999) 507-526.
\bibitem{y3}M. Tegze and M. Vlach, Minimizing the range of lateness on a single machine under generalized due dates, {\it INFOR} 35 (1997) 286-296.
\bibitem{tur} L. Turner, A. P. Punnen, Y. P, Aneja, and H. W. Hamacher, On generalized balanced optimization problems, {\it Mathematical Methods of Operations Research} 73 (2011) 19--27.
\bibitem{wu} L. Wu, An efficient algorithm for the most balanced spanning tree problems, {\it Advanced science letters} 11 (2012) 776-778.
\bibitem{b15} Z. Zeitlin, Minimization of maximum absolute deviation in integers, {\it Discrete Applied Mathematics} 3 (1981) 203--220.
\bibitem{Zhang2009} R.~Zhang, S.~N. Kabadi and A.~P. Punnen,
\newblock The minimum spanning tree problem with conflict constraints and its variations, {\em Discrete Optimization} 8 (2011) 191--205.
\bibitem{zp}  R. Zhang and A.P. Punnen, Quadratic bottleneck knapsack problems, {\it Journal of Heuristics} 19 (2013) 573--589.



\end{thebibliography}
\end{document}